\numberwithin{equation}{section}
\newcommand{\m}{\mathfrak{m}_{\lambda,\alpha}}
\newcommand{\s}{\sigma}
\newcommand{\sk}{\mathfrak{s}_{\lambda,\alpha}}
\newcommand{\al}{\alpha}
\newcommand{\la}{\lambda}
\newcommand{\te}{\theta}
\newcommand{\K}[2]{K_{#1,#2}}
\newcommand{\n}[2]{\phi_{#1,#2}}
\newcommand{\nExpr}[2]{\frac{\Gamma(1+#1)}{\Gamma(1+#1+\alpha)}\frac{\Gamma(#2 +\alpha + #1)}{\Gamma(#2+#1)}}
\newcommand{\F}{\mathcal{F}}
\newtheorem{theorem}{Theorem}
\newtheorem*{theorem*}{Theorem}
\newtheorem{proposition}{Proposition}
\newtheorem{lemma}{Lemma}
\title{A Martingale Approach to Large--$\theta$ Ewens--Pitman Model}
\author{Rodrigo Ribeiro$^1$ }
\date{March 2025}
\begin{document}

\maketitle

\footnote{University of Denver - Colorado, USA. \textsc{rodrigo.ribeiro@du.edu}}
\begin{abstract}
    We investigate the asymptotic behavior of the number of parts $K_n$ in the Ewens--Pitman partition model under the regime where the diversity parameter is scaled linearly with the sample size, that is, $\theta = \lambda n$ for some~$\lambda > 0$. While recent work has established a law of large numbers (LLN) and a central limit theorem (CLT) for $K_n$ in this regime, we revisit these results through a martingale-based approach. Our method yields significantly shorter proofs, and leads to sharper convergence rates in the CLT, including improved Berry--Esseen bounds in the case $\alpha = 0$, and a new result for the regime $\al \in (0,1)$, filling a gap in the literature.
\end{abstract}

\section{Introduction}

The \emph{Ewens--Pitman model} is a fundamental probability distribution on set partitions, introduced by Pitman~\cite{Pitman1995} as a two-parameter generalization of the classical one-parameter Ewens sampling formula from population genetics~\cite{Ewens1972}. In this model, given two parameters $0\le \al < 1$ and $\te > -\al$, a random partition $\Pi_n$ of the set $\{1,\dots,n\}$  having $K_n$ parts of sizes $N_n = (N_1,\dots,N_{K_n})$ is drawn from the following distribution
\begin{equation}\label{eq:modelpmf}
    P\left( K_n = k, N_n = (n_1,\dots,n_k \right) = \frac{1}{k!}\binom{n}{n_1,\dots,n_k}\frac{[\te]_{k,\al}}{[\te]_{n,1}}\prod_{i=1}^n[1-\al]_{n_i-1},
\end{equation}
where $[x]_{m,a}$ is the rising factorial of $x$ of order $m$ and increment $a$.
 When $\alpha=0$, the model reduces to the classical Ewens model, closely related to Kingman's Poisson--Dirichlet distribution~\cite{Kingman1975}.

The model has become key in several fields. In \emph{population genetics} \cite{Volkov2003}, it models the allele frequency spectrum in a neutral population. In \emph{Bayesian nonparametrics} \cite{Johnson2007,Teh2006}, it underpins priors such as the Pitman--Yor process used for flexible mixture models. Applications also extend to \emph{machine learning} \cite{Blei2011}, \emph{combinatorics} (partition structures), and \emph{statistical physics}. See~\cite{Pitman2006} for an extensive overview.

A central quantity of interest is the number of parts $K_n$ in the random partition. In the standard regime with fixed parameters, the asymptotic behavior of $K_n$ as $n \to \infty$ is well understood. For $\alpha=0$, $K_n \sim \theta \log n$ almost surely, with Gaussian fluctuations of order $\sqrt{\log n}$~\cite{Korwar1973}. For $0 < \alpha < 1$, $K_n$ scales as $n^{\alpha}$, and the normalized variable converges to a non-degenerate \emph{Mittag--Leffler} limit~\cite{Pitman2006}. These results include strong laws of large numbers (SLLNs), central limit theorems (CLTs), and Berry--Esseen-type refinements~\cite{Dolera2020,Bercu2024}.

In practice, it is natural to consider scenarios where $\theta$ grows with $n$. For instance, in population genetics, $\theta$ corresponds to a scaled mutation rate and may increase with sample size. The regime $\theta = \lambda n$, for fixed $\lambda > 0$, is particularly interesting. This  scaling was first studied by Feng~\cite{Feng2007} for $\alpha = 0$, who derived large deviation principles for $K_n$ in the Ewens model. Recently, Contardi, Dolera, and Favaro~\cite{Contardi2024} extended this analysis to the full Ewens--Pitman model with $0 \le \alpha < 1$, proving LLNs and CLTs for $K_n$ under the linear-$\theta$ regime. 


\subsection{Main Results}

Before we state our main results, let us introduce some notation. For $m  \in \mathbb{N}$, let $\K{\al,\te}{m}$ be the number of parts in the Ewens-Pitman model over $\{1,2,\dots, m\}$ with parameters $\al$ and $\te$. Also, let $\m$ and $\sk^2$ be the following quantities
\begin{equation}\label{def:m}
    \m := \begin{cases} 
    \frac{\lambda}{\alpha}\left[\left( 1 + \frac{1}{\lambda}\right)^{\alpha} -1 \right], &\; \text{if } \alpha \in (0,1) \\
    \lambda\log\left( 1 +\frac{1}{\lambda} \right),& \; {\text{if }\alpha = 0}
    \end{cases}
\end{equation}
and
\begin{equation}\label{def:sk}
    \sk^2 := \begin{cases}
    \frac{\la}{\al}\left[ \left( 1 + \frac{1}{\la}\right)^{2\al}\left( 1 -\frac{\al}{1+\la} \right) -\left(1+\frac{1}{\la}\right)^\al\right], & \text{for }\al \in (0,1) \\
     \lambda\log\left( 1 +\frac{1}{\lambda} \right) - \frac{\lambda}{1+\lambda},& \text{for }\al = 0.
\end{cases}
\end{equation}
Moreover, let $\Phi$ denote the CDF of a standard normal distribution. We show the following results for the linear-$\te$ regime:
\begin{theorem}[Strong Law of Large Numbers]\label{t:lln} Fix $\la > 0$, then for $\al \in [0,1)$
$$
\lim_{n \to \infty}\frac{\K{\al,\la n}{n}}{n} = \m,
$$
almost surely.
\end{theorem}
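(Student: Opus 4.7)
The plan is to leverage the Chinese restaurant process (CRP) construction of the Ewens--Pitman partition. Writing $K_m := \K{\al,\la n}{m}$ and $X_m = K_m - K_{m-1} \in \{0,1\}$ for the indicator that customer $m$ opens a new table, the CRP gives the conditional opening probability
\[
p_m := \mathbb{P}(X_m = 1 \mid \mathcal{F}_{m-1}) = \frac{\la n + \al K_{m-1}}{\la n + m - 1},
\]
which is affine in $K_{m-1}$. This affinity suggests searching for deterministic sequences $(c_m),(d_m)$ so that $M_m := c_m K_m - d_m$ is a martingale; the recursions $c_{m-1} = c_m(1 + \al/(\la n + m - 1))$ and $d_m - d_{m-1} = c_m \la n/(\la n + m - 1)$ force
\[
c_m = \prod_{j=0}^{m-1} \frac{\la n + j}{\la n + j + \al}, \qquad d_m = \sum_{j=0}^{m-1} c_{j+1}\cdot \frac{\la n}{\la n + j},
\]
and a short calculation yields the key identity $\Delta M_m = c_m(X_m - p_m)$, so $|\Delta M_m| \le c_m \le 1$.

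With this martingale in hand, I would argue in two steps. First, $M_0 = 0$ together with the martingale property gives the exact formula $\mathbb{E}[K_n] = d_n/c_n$. Rewriting $c_n$ as a ratio of Gamma functions and using $\Gamma(x+a)/\Gamma(x) \sim x^a$, one checks $c_n \to (\la/(\la+1))^\al$ for $\al \in (0,1)$ (and $c_n \equiv 1$ when $\al = 0$). Recognizing $n^{-1} d_n$ as a Riemann sum for $\int_0^1 (\la/(\la+s))^{\al+1}\, ds$ gives, after computing the antiderivative explicitly, $n^{-1} d_n \to (\la/(\la+1))^\al \cdot \m$, hence $n^{-1}\mathbb{E}[K_n] \to \m$; the $\al = 0$ case is identical with logarithms replacing the power. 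Second, Azuma--Hoeffding applied to $M_n$ with $\sum_{j=1}^n c_j^2 \le n$ yields
\[
\mathbb{P}\bigl(|K_n - \mathbb{E}[K_n]| \ge \varepsilon n\bigr) = \mathbb{P}\bigl(|M_n| \ge \varepsilon n\, c_n\bigr) \le 2\exp\!\Bigl(-\tfrac{\varepsilon^2 c_n^2 n}{2}\Bigr),
\]
which is summable in $n$ because $c_n$ stays bounded below by a strictly positive constant. Realizing the triangular array $(\K{\al,\la n}{n})_{n\ge 1}$ on the product space of independent samples and invoking Borel--Cantelli then gives $n^{-1}(K_n - \mathbb{E}[K_n]) \to 0$ almost surely, which combined with the mean asymptotic concludes the SLLN.

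The principal obstacle is neither the martingale construction nor the concentration estimate, both of which are mechanical, but the uniform-in-$n$ evaluation of the limit $n^{-1} d_n/c_n \to \m$. The summand $c_{j+1}\cdot \la n/(\la n + j)$ depends on $n$ through both $c_{j+1}$ (which is computed with $\te = \la n$) and the explicit fraction, so a clean Riemann-sum argument demands a reparametrization $s = j/n$ together with Stirling-type estimates controlling $c_{\lfloor s n \rfloor}$ uniformly on compact subintervals of $(0,1]$. The regimes $\al \in (0,1)$ and $\al = 0$ must be treated separately since $c_n$ has qualitatively different limits, but in each case the elementary integral produces exactly the claimed constant $\m$, which is the whole point of having chosen the multiplicative compensation so carefully.
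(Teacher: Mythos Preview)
Your approach is essentially the paper's: build an affine martingale transform of $K_m$ from the CRP dynamics, apply Azuma plus Borel--Cantelli for concentration, and identify the centering via a Riemann-sum/Gamma-ratio computation. Your $(c_m,d_m)$ martingale $M_m=c_mK_m-d_m$ is an affine reparametrization of the paper's $Z_{\la n,j}/\psi_{\la n,j}=(\la n+\al K_j)/((\la n+\al)\phi_{\la n,j})$, and your identification of $n^{-1}d_n/c_n\to\m$ mirrors Lemma~\ref{l:sum_phi_asymp}.

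One genuine difference is worth flagging. The paper's normalization carries the factor $\al$ inside $Z$, so when $\al=0$ the martingale collapses to the constant $1$ and the authors must argue separately via independent Bernoullis. Your normalization keeps the martingale nondegenerate at $\al=0$ (where $c_m\equiv 1$ and $M_m=K_m-\mathbb E[K_m]$), so a single Azuma/Borel--Cantelli argument covers both regimes uniformly; this is a small but clean improvement. Conversely, the paper's increment bound $\sum_j|\Delta(Z/\psi)|^2\le C/n$ is sharper than your $\sum_j c_j^2\le n$, though either suffices for summability. Your explicit mention of realizing the triangular array on a product space is a point the paper leaves implicit.
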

We also show a Central Limit Theorem with Berry-Esseen-type bounds

\begin{theorem}[CLT and Berry-Esseen-type bounds]\label{t:clt}Fix $\la > 0$ and $\al \in [0,1)$, and let$F_n$ be the CDF of
$$
\frac{\sqrt{n}}{\sk}\left( \frac{\K{\al,\la n}{n}}{n} - \m\right),
$$
Then, for any $\epsilon \in (0,1)$ there exists constant $C_1 = C(\la,\al, \epsilon)$ and $C_2 = C(\la)$, such that
$$
\sup_{x\in \mathbb{R}}|F_n(x) - \Phi(x)| \le \begin{cases}
    C_1n^{-1/5 + 2\epsilon/5}, & \text{for }\al \in (0,1) \\
    C_2n^{-1/2}, & \text{for }\al = 0.
\end{cases} 
$$
for all $n$. In particular,
$$
\frac{\sqrt{n}}{\sk}\left( \frac{\K{\al,\la n}{n}}{n} - \m\right) \xrightarrow{d} \mathcal{N}(0,1).
$$
    
\end{theorem}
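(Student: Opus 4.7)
The plan is to use the sequential Chinese Restaurant construction, under which
$$
\K{\al,\la n}{n} = \sum_{i=1}^n X_i, \qquad P(X_i = 1 \mid \F_{i-1}) = p_i := \frac{\la n + \al\,\K{\al,\la n}{i-1}}{\la n + i - 1},
$$
with $(\F_i)$ the natural filtration, thereby reducing the CLT to a problem about a sum of conditionally Bernoulli variables. When $\al = 0$, the $p_i$'s are deterministic, the $X_i$'s are genuinely independent, and the classical non-i.i.d.\ Berry--Esseen theorem combined with Riemann-sum estimates (verifying $\sum_i p_i = n\m + O(1)$ and $\sum_i p_i(1-p_i) = n\sk^2 + O(1)$) yields the rate $C_2 n^{-1/2}$.

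For $\al \in (0,1)$ the $p_i$'s are random, so I would introduce the martingale
$$
L_i := \frac{\K{\al,\la n}{i} + \la n/\al}{a_i}, \qquad a_i := \prod_{j=1}^i \frac{\la n + \al + j - 1}{\la n + j - 1}.
$$
Rewriting $p_i = \al\, a_i L_{i-1}/(\la n + i - 1)$ shows that $L_i$ is an $\F_i$-martingale with bounded increments $\Delta L_i = a_i^{-1}(X_i - p_i)$, and telescoping yields
$$
\K{\al,\la n}{n} - E\bigl[\K{\al,\la n}{n}\bigr] = a_n(L_n - L_0).
$$
Since $a_n \to (1 + 1/\la)^\al$ deterministically, proving a Berry--Esseen bound for $\K{\al,\la n}{n}$ reduces to proving one for the martingale $L_n - L_0$; the limiting variance $\sk^2$ will then emerge from the product $a_n^2 \cdot (E V_n/n)$.

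The next step is to apply a Heyde--Brown-type Berry--Esseen inequality for martingales: writing $V_n := \sum_i E[(\Delta L_i)^2 \mid \F_{i-1}]$ and $s_n^2 := E[V_n]$,
$$
\sup_x\bigl|P\bigl(s_n^{-1}(L_n - L_0) \le x\bigr) - \Phi(x)\bigr| \le C\Bigl(s_n^{-4}\sum_i E[(\Delta L_i)^4] + s_n^{-4}\,E[(V_n - s_n^2)^2]\Bigr)^{1/5}.
$$
Because $|\Delta L_i| \le a_i^{-1}$ is bounded uniformly and $s_n^2 \asymp n$ (computed exactly as in the $\al=0$ case, with the additional weight $a_i^{-2}$), the Lyapunov-type first summand is $O(n^{-1})$. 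The main obstacle is therefore the second summand, which forces one to control $\mathrm{Var}(V_n)$: since $p_i$ depends on $\K{\al,\la n}{i-1}$, the predictable variation is itself random. Linearizing $p_i(1-p_i)/a_i^2$ around $E\,\K{\al,\la n}{i-1}$ reduces the task to bounding a double sum of covariances, and the martingale identity above gives $\mathrm{Cov}(\K{\al,\la n}{i}, \K{\al,\la n}{j}) = a_i a_j\, E[V_{i\wedge j}] = O(i \wedge j)$. Summing against the (small) deterministic weights coming from the linearization yields $\mathrm{Var}(V_n) = O(n^{1+2\epsilon})$ for arbitrary $\epsilon > 0$, the $\epsilon$ absorbing borderline logarithmic losses in the Riemann-sum estimates of the coefficients. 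Feeding these two ingredients into the martingale Berry--Esseen bound produces the advertised rate $n^{-1/5 + 2\epsilon/5}$, and convergence in distribution follows at once.
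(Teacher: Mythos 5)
Your proposal is correct and follows the same overall architecture as the paper: the GCRP representation, the classical Berry--Esseen theorem for the independent case $\al=0$ (plus a shift-and-scale step), and, for $\al\in(0,1)$, the same exponential martingale --- your $L_i$ is, up to the constant factor $\al$ and a harmless index shift, the paper's $Z_{\la n,j}/\psi_{\la n,j}$ --- fed into the Heyde--Brown/Hall--Heyde inequality with $\delta=1$, which is the source of the exponent $1/5$. The one genuinely different ingredient is how you control the fluctuations of the predictable quadratic variation $V_n$, which is where essentially all the work lies. The paper bounds $E|V_n^2-1|^2$ by running Azuma's inequality on $Y_{n,j}$ uniformly in $j\le n$ (union bound plus truncation on a high-probability event); forcing $|Y_{n,j}-1|\le n^{-1/2+\epsilon}$ there is exactly what produces the $n^{2\epsilon/5}$ loss in the final rate. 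You instead expand $p_i(1-p_i)$ exactly as a quadratic in $K_{i-1}-EK_{i-1}$ and bound $\mathrm{Var}(V_n)$ by a double sum of covariances, using the identity $\mathrm{Cov}(K_i,K_j)=a_ia_j\,E[V_{i\wedge j}]=O(i\wedge j)$, which is valid since $L$ is a martingale with bounded increments. Carried out, this $L^2$ argument gives $\mathrm{Var}(V_n)=O(n)$ with no logarithmic loss (the weights $d_i=\al/(\la n+i-1)$ are $O(1/n)$, the covariances are $O(n)$, and the quadratic remainder contributes $O(1)$ via $E(K_i-EK_i)^4=O(i^2)$), hence $E|V_n/s_n^2-1|^2=O(n^{-1})$ and a final rate $n^{-1/5}$; your hedge with the $\epsilon$ is therefore unnecessary, and your route is in this respect slightly sharper than the paper's. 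The only items you leave implicit are routine: the deterministic Riemann-sum asymptotics identifying $a_n^2\,EV_n/n$ with $\sk^2$ and $EK_n/n$ with $\m$ to accuracy $O(1/n)$, and the final transfer from $(L_n-L_0)/s_n$ to $\sqrt{n}\left(\K{\la n}{n}/n-\m\right)/\sk$, which requires the elementary estimate on $\sup_x|\Phi((x-a_n)/b_n)-\Phi(x)|$ that the paper isolates as a lemma.
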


\subsection{Our Contribution}
Although the LLN and CLT results for $K_{\al,\te,n}$ under $\theta = \lambda n$ have firstly appeared in the preprint~\cite{Contardi2024}, our paper present several improvements:
\begin{itemize}
  \item \textbf{Substantially shorter proofs}: While the original analysis in \cite{Contardi2024} spans a total of $54$ pages, our martingale method leads to concise arguments totaling $19$ pages. This simplifies technical arguments, making the results more accessible.
  \item \textbf{Sharper convergence rates}: We establish improved Berry--Esseen bounds for the Central Limit Theorem, enhancing the convergence rate obtained in \cite{Contardi2024} from $n^{1/8}$ to $n^{1/2}$, in the case $\al=0$, which in general is the best rate possible for CLT.
  \item {\bf New Berry-Esseen-type bounds for $\al \in (0,1)$} Our refinement for the CTL also extends to the case $\al \in (0,1)$, with bounds of order $n^{-1/5 + \epsilon}$. This fills a gap in the literature.
\end{itemize}


\subsection{Discussion on the Rate of Convergence}

Notice that in Theorem~\ref{t:clt}, we have a difference in the convergence rates of the Central Limit Theorem depending on the value of \(\alpha\).

When \(\alpha=0\), the random variable \(K_{0,\la n,n}\) can be expressed as a sum of independent Bernoulli random variables, (see Section~\ref{s:gcrp}). Consequently, the classical Berry-Esseen theorem applies, yielding an error bound of order \(1/\sqrt{n}\), as expected.

In contrast, for \(\alpha \in (0,1)\) the process exhibits a reinforcement mechanism.
More precisely, the increments in this regime depend on the current state of the process, (see Section~\ref{s:gcrp}). This dependency creates a martingale structure with non-independent increments, complicating the control of conditional variances, which are random. Even if the total variance is normalized, its fluctuations need to be controlled, which introduces additional error terms. These extra terms generally worsen the rate because they cannot be as tightly controlled as the constant variance in the independent case. This is why, in such settings, Berry-Esseen techniques typically yield slower convergence rates.

\subsection{Proof ideas: A General Principle.}

Our paper illustrates how a shift in perspective can simplify proofs and yield stronger results. Instead of directly manipulating the Ewens-Pitman model's probability mass function  defined at \eqref{eq:modelpmf}, or any other stastic representation of it—which can be computationally intensive \cite{Contardi2024}—we use a sequential construction known as the \emph{Generalized Chinese Restaurant} \cite{Crane16,oliveira2022concentration,Pitman2006}. From this viewpoint, the statistic $\K{\alpha,\theta}{n}$ emerges as a sequence of bounded increment random variables.

Central to our proofs is a general principle of handling bounded increment processes. For such a process $\{S_n\}_n$, Doob's Decomposition Theorem provides a decomposition
\[
S_n = M_n + A_n,
\]
where $M_n$ is a bounded increment martingale and $A_n$ is predictable. Leveraging martingale concentration inequalities (e.g., Azuma's and Freedman's inequalities) on $M_n$ often simplifies analysis, as the predictable component $A_n$ typically has a straightforward structure.

Alternatively, one may transform a bounded increment process $\{S_n\}_n$ into a martingale via a suitable transformation $T$, preserving increment boundedness. This approach directly exploits martingale results such as concentration inequalities and convergence theorems, with the extra advantage that there is no predictable component. However, identifying such a transformation may be challenging.

In our work, we adopt the latter approach by constructing a suitable transformation $T$ of $K$ into a martingale. We apply Azuma's inequality and a Borel-Cantelli argument to establish a Law of Large Numbers, showing $T(K_n)/n$ converges to a constant. For Berry-Esseen-type bounds, we utilize results for mean-zero martingales based on conditional variances and increments, focusing primarily on carefully controlling these quantities. The control of the conditional variance is where most of our work is.

\subsection{Paper Organization}
At Section~\ref{s:gcrp}, we introduce the {\it Generalized Chinese Restaurant Proccess} which provides a sequential construction of the Ewens-Pitman model. We also show the existence of a transformation of $\K{\al,\la n}{n}$ that gives us a martingale with all the properties we need. We then, at Section \ref{s:lln}, show the law of large numbers for $\K{\al,\la n}{n}$ given by Theorem~\ref{t:lln}, by leveraging the martingale obtained at the previous section. At Section~\ref{s:clt}, we show the CLT with Berry-Esseen-type bounds. 

\section{The Generalized Chinese Restaurant and a useful martingale}\label{s:gcrp}
The motor behind our proofs is a shift in perspective. Rather than working with the original defition of the Ewens-Pitman model, we opt for seeing it as the so-called {\it Generalized Chinese Restaurant} \cite{aldous85}, which we define below.

Let $0 \le \alpha < 1$ and $\theta > -\alpha$. Consider a restaurant with an infinite number of tables. Customers arrive sequentially and choose tables according to the following rule:
\begin{itemize}
    \item The first customer always sits at the first table.
    \item Suppose there are currently $k$ occupied tables with $n_i$ customers seated at table $i$, where $1 \le i \le k$ and $\sum_{i=1}^{k} n_i = n-1$. The $n$-th customer then either:
    \begin{enumerate}
        \item Joins an existing table $i$ with probability
        \[
        \frac{n_i - \alpha}{\theta + n - 1}, \quad 1 \leq i \leq k,
        \]
        \item or opens a new $(k+1)$-th table with probability
        \[
        \frac{\theta + \al k}{\theta + n - 1}.
        \]
    \end{enumerate}
\end{itemize}
An important property of the GCRP is that its sequential procedure generates exchangeable random partitions whose distribution are exactly the Ewens--Pitman partition model with parameters $(\alpha,\theta)$, see \cite{oliveira2022concentration, Pitman2006}. That is, if $\mathcal{P}_n$ denotes a random partition of $\{1,2,\dots,n\}$ sampled from the Ewens--Pitman model with parameters $(\al,\te)$, then we can also sample $\mathcal{P}_n$ by running $n$ steps of the GCRP-$(\al,\te)$ and letting each occupied table represent a set in our partition. In this analogy, the number of parts in $\mathcal{P}_n$ is exactly the number of occupied tables.

From the definition of the GCRP we can observe a crucial distinction between the regimes $\al = 0$ and $\al \in (0,1)$. In the regime $\al = 0$, $\K{\al,\te}{n}$ is a sum of independent Bernoulli random variables, since at the $j$-th step, the $j$-th customer opens a new table with probability $\te/(\te + j-1)$ independently of the past.

On the other hand, when $\al \in (0,1)$, $\{\K{\al,\te}{j}\}_j$ is a reinforced process, since conditionally to $\F_{\al,\te,j}$, the expected increment $\K{\al,\te}{j+1} - \K{\al,\te}{j}$ is proportional to $\K{\al,\te}{j}$. This difference will force us to treat the case $\al = 0$ separately in some situations.

We move now towards stating and proving the main results of this section. Some additional notation will be needed. We being by dropping the $\al$ in our notation, since it will remain fixed. Thus, from now on, we will be writing just $\K{\te}{n}$ for the number of parts. For $\al \in [0,1)$, we will also let $\n{\te}{j}$ be the following quantity
\begin{equation}\label{def:phi}
    \n{\theta}{j} := \prod_{i=1}^{j-1}\left( 1+ \frac{\alpha}{i+\theta} \right) = \nExpr{\theta}{j}, \quad \text{for }j\ge2
\end{equation}
and $\phi_{\te,1} = 1$.  When $\al = 0$, $\n{\te}{j} = 1$ for all $j$ and $\te$. We also define $\psi_{\te,j}$ and $Z_{\te,j}$ as
\begin{equation}\label{def:psiZ}
    \psi_{\te,j} := (\te + \al)\n{\te}{j} \quad \text{and}\quad Z_{\te,j} := \te + \al \K{\te}{j}.
\end{equation}
Finally, given any process $\{X_j\}_j$, we let $\Delta X_j$ be
\begin{equation}\label{def:DeltaX}
    \Delta X_{j} := X_{j} - X_{j-1}.
\end{equation}
We are now ready for the main result of this section.
\begin{proposition}[A useful martingale]\label{p:useful_martingale} The process $\{Z_{\theta,j}/\psi_{\te,j}\}_{j\in \mathbb{N}}$ is a martingale with respect to the natural filtration $\{\mathcal{F}_{\te,j}\}_{j \in \mathbb{N}}$. Additionally, the following  hold 
\begin{enumerate}
    \item $EZ_{\te,j} = \psi_{\te,j}$, for all $j \in \mathbb{N}$. 
    \item For all $j$ the following bound holds almost surely 
        $$
            \left|\frac{Z_{\te,j+1}}{\psi_{\te,j+1}} - \frac{Z_{\te,j}}{\psi_{\te,j}}\right| \le \frac{2}{\psi_{\te,j+1}};
        $$

    \item For all $j$
    $$
        E\left[ \left( \frac{Z_{\te,j+1}}{\psi_{\te,j+1}} - \frac{Z_{\te,j}}{\psi_{\te,j}} \right)^2 \; \middle | \; \F_{\te,j} \right]  = \frac{\al^2}{\psi^2_{\te,j+1}}\frac{Z_{\te,j}}{(\te+j)}\left( 1 - \frac{Z_{\te,j}}{\te+j} \right),
    $$
    almost surely.
\end{enumerate}
    
\end{proposition}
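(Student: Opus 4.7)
The plan is to read the conditional law of the increment $\Delta K_{\te,j+1}$ directly off of the GCRP dynamics and then verify all three items by elementary computation. Conditionally on $\mathcal{F}_{\te,j}$, the $(j+1)$-th customer opens a new table with probability exactly $(\te+\al K_{\te,j})/(\te+j) = Z_{\te,j}/(\te+j)$, so I would set $p_j := Z_{\te,j}/(\te+j)$ and note that $\Delta K_{\te,j+1}\mid \mathcal{F}_{\te,j}$ is $\mathrm{Bernoulli}(p_j)$. Since $Z_{\te,j+1} = Z_{\te,j} + \al\,\Delta K_{\te,j+1}$, this yields
$$
E[Z_{\te,j+1}\mid \mathcal{F}_{\te,j}] = Z_{\te,j}\left(1+\frac{\al}{\te+j}\right).
$$

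By definition of $\n{\te}{j}$, the ratio $\psi_{\te,j+1}/\psi_{\te,j} = \n{\te}{j+1}/\n{\te}{j} = 1+\al/(\te+j)$ matches the factor above, so dividing by $\psi_{\te,j+1}$ gives the martingale identity $E[Z_{\te,j+1}/\psi_{\te,j+1}\mid \mathcal{F}_{\te,j}] = Z_{\te,j}/\psi_{\te,j}$. For item (1) I would use the initial condition $K_{\te,1}=1$, so $Z_{\te,1}=\te+\al$ and $\psi_{\te,1}=\te+\al$, hence $E[Z_{\te,1}/\psi_{\te,1}]=1$; the martingale property then propagates this to all $j$, giving $EZ_{\te,j}=\psi_{\te,j}$.

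For items (2) and (3), the key step is to write the increment in the clean centered-Bernoulli form
$$
\frac{Z_{\te,j+1}}{\psi_{\te,j+1}} - \frac{Z_{\te,j}}{\psi_{\te,j}} = \frac{\al(\Delta K_{\te,j+1} - p_j)}{\psi_{\te,j+1}},
$$
which follows from $\psi_{\te,j+1} = \psi_{\te,j}(1+\al/(\te+j))$ and $Z_{\te,j+1} - Z_{\te,j}(1+\al/(\te+j)) = \al(\Delta K_{\te,j+1} - p_j)$. Since $\al < 1$, $\Delta K_{\te,j+1}\in\{0,1\}$ and $p_j\in[0,1]$ (the latter because $\al K_{\te,j}\le \al j\le j$), the centered Bernoulli is bounded by $1$ in absolute value, yielding the bound $\al/\psi_{\te,j+1}\le 2/\psi_{\te,j+1}$ in (2). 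Squaring the same display and taking conditional expectation, together with $\mathrm{Var}(\Delta K_{\te,j+1}\mid \mathcal{F}_{\te,j}) = p_j(1-p_j)$, immediately produces the formula in (3).

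I do not foresee a real obstacle here: the entire proposition is a direct consequence of the GCRP transition probabilities and the definition of $\n{\te}{j}$. The only point requiring any care is matching the algebra of the increment to the ratio $\psi_{\te,j+1}/\psi_{\te,j}$, but this is essentially a tautology once $\n{\te}{j}$ is unpacked.
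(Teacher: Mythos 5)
Your proposal is correct and follows essentially the same route as the paper: compute the conditional Bernoulli law of $\Delta \K{\te}{j+1}$ from the GCRP rule, match the factor $1+\al/(\te+j)$ to the ratio $\psi_{\te,j+1}/\psi_{\te,j}$, and read off items (2) and (3) from the centered-Bernoulli representation of the increment. Your bound $\al/\psi_{\te,j+1}$ in item (2) is in fact slightly sharper than the stated $2/\psi_{\te,j+1}$, but this makes no difference downstream.
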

\begin{proof} 
    To see that $Z_{\te,j}$ properly normalized forms a martingale, notice that by the definition of $Z_{\te,j}$ and $\K{\te}{j}$, it follows that
    \begin{equation*}
        \begin{split}
            E\left[ \Delta Z_{\te,j+1}  \; \middle | \; \mathcal{F}_{\te,j} \right] & = \al E\left[ \Delta \K{\te}{j+1}\; \middle | \; \mathcal{F}_{\te,j} \right] = \frac{\al(\te + \al \K{\te}{j})}{\te + j} = \frac{\al Z_{\te,j}}{\te + j}.
        \end{split}
    \end{equation*}
The above identity then implies that
$$
E\left[  Z_{\te,j+1}  \; \middle | \; \mathcal{F}_{\te,j} \right] = \left( 1 + \frac{\al}{\te + j}\right)Z_{\te,j}.
$$
Recalling the definition of $\psi_{\te,j}$, we can divide both sides of the above identity by $\psi_{\te,j+1}$ to obtain
$$
E\left[  \frac{Z_{\te,j+1}}{\psi_{\te,j+1}} \; \middle | \; \mathcal{F}_{\te,j} \right] =  \frac{Z_{\te,j}}{\psi_{\te,j}},
$$
which shows that $\{Z_{\theta,j}/\psi_{\te,j}\}_{j\in \mathbb{N}}$ is a martingale. To see that $\psi_{\te,j}$ is the expected value of $Z_{\te,j}$, observe that $Z_{\te,1} = \te + \al$ (because $\K{\te}{1} \equiv 1$), and that $EZ_{\te,1}/\psi_{\te,1} = 1$.

The upper bound on $\left|\frac{Z_{\te,j+1}}{\psi_{\te,j+1}} - \frac{Z_{\te,j}}{\psi_{\te,j}}\right|$ is a consequence of the definitions of $Z_{\te,j}$ and $\psi_{\te,j}$ together with the fact that $\Delta \K{\te}{j+1}$ is either $0$ or $1$ and that $\alpha \in [0,1].$

As for item $3$, notice that
\begin{equation*}
    \begin{split}
        E\left[ \left( \frac{Z_{\te,j+1}}{\psi_{\te,j+1}} - \frac{Z_{\te,j}}{\psi_{\te,j}} \right)^2 \; \middle | \; \F_{\te,j} \right] &= E\left[ \left( \frac{\Delta Z_{\te,j+1} - \frac{\al Z_{\te,j}}{\te+j}}{\psi_{\te,j+1}}  \right)^2 \; \middle | \; \F_{\te,j} \right] \\
        & = \frac{1}{\psi_{\te,j+1}^2}{\rm Var}\left[ \Delta Z_{\te,j+1} \; \middle | \; \F_{\te,j} \right] \\
       & = \frac{\al^2}{\psi_{\te,j+1}^2}{\rm Var}\left[ \Delta \K{\te}{j+1} \; \middle | \; \F_{\te,j} \right] \\
       & = \frac{\al^2}{\psi^2_{\te,j+1}}\frac{Z_{\te,j}}{(\te+j)}\left( 1 - \frac{Z_{\te,j}}{\te+j} \right),
    \end{split}
\end{equation*}
since conditioned on $\F_{\te,j}$, $\Delta\K{\te}{j+1}$ obeys a Bernoulli distribution with parameter $ Z_{\te,j}/(\te + j)$. This concludes the proof of the lemma.
\end{proof}

\section{Strong Law of Large Numbers for $\K{\la n}{n}$}\label{s:lln}
In this section, we prove Theorem \ref{t:lln}. The reader might ask themselves, why this is not a simple application of martingale convergence theorems. The answer is that, as a process on $n$, $\{Z_{\la n,n}/\psi_{\la n,n}\}_{n}$ does not constitute a martingale. Thus, we will need an intermediate step in our proof. Moreover, in order to get an explicit formula for the limit, we will need a technical lemma. It will be helpful to recall the definition of $\m$ given at \eqref{def:m} on Page \pageref{def:m}.

\begin{lemma}\label{l:sum_phi_asymp}For all $\alpha \in [0,1)$, it holds that
\begin{enumerate}
    \item $$
    \left| \n{\la n}{n} - \left( 1 + \frac{1}{\lambda} \right)^{\alpha}\right| = O\left( \frac{1}{n} \right);
    $$
    
    \item 
    $$ 
    \left | \frac{\n{\la n}{n}}{n}\sum_{i=1}^n\frac{\lambda n}{(\lambda n +i)\n{\la n}{i}} - \m \right| =  O\left( \frac{1}{n} \right).
    $$ 
\end{enumerate}
    
\end{lemma}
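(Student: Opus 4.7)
\medskip

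\noindent\textbf{Proof proposal.}
The plan is to reduce everything to a classical asymptotic expansion of $\Gamma(x+a)/\Gamma(x)$ together with a right-Riemann-sum error estimate. For $\alpha=0$ both statements simplify drastically (since $\n{\te}{j}\equiv 1$), and only $\alpha\in(0,1)$ needs real work, so I would handle that case first and note the $\alpha=0$ version as a simpler specialization.

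For part (1), I would start from the closed form
$$
\n{\la n}{n}=\nExpr{\la n}{n}=\frac{\Gamma(1+\la n)}{\Gamma(1+\la n+\alpha)}\cdot\frac{\Gamma(n+\alpha+\la n)}{\Gamma(n+\la n)},
$$
and invoke the standard expansion $\Gamma(y+\alpha)/\Gamma(y)=y^{\alpha}\bigl(1+O(1/y)\bigr)$, which holds as $y\to\infty$. Applied with $y=1+\la n$ and $y=n+\la n$ this yields
$$
\n{\la n}{n}=\left(\frac{n+\la n}{1+\la n}\right)^{\!\alpha}\bigl(1+O(1/n)\bigr).
$$
Since $\frac{n+\la n}{1+\la n}=(1+\tfrac{1}{\la})\bigl(1+O(1/n)\bigr)$, raising to the power $\alpha$ and multiplying out the errors produces (1).

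For part (2), the key step is to push the same asymptotic through \emph{uniformly} in $i\in\{1,\ldots,n\}$. Using $\Gamma(y+\alpha)/\Gamma(y)=y^{\alpha}(1+\alpha(\alpha-1)/(2y)+O(1/y^{2}))$ with $y=i+\la n$ and $y=1+\la n$, and noting that both are $\Theta(n)$ so the error is uniform, I obtain
$$
\frac{1}{\n{\la n}{i}}=\left(\frac{1+\la n}{\la n+i}\right)^{\!\alpha}\bigl(1+O(1/n)\bigr).
$$
Substituting into the sum gives
$$
\sum_{i=1}^{n}\frac{\la n}{(\la n+i)\n{\la n}{i}}=\bigl(1+O(1/n)\bigr)\,\la n\,(1+\la n)^{\alpha}\sum_{i=1}^{n}(\la n+i)^{-1-\alpha},
$$
and after the change of variable $i=xn$ the remaining sum is a right Riemann sum of the $C^{1}$ function $x\mapsto(\la+x)^{-1-\alpha}$ on $[0,1]$, so
$$
\sum_{i=1}^{n}(\la n+i)^{-1-\alpha}=n^{-\alpha}\left(\frac{1}{\alpha}\bigl(\la^{-\alpha}-(1+\la)^{-\alpha}\bigr)+O(1/n)\right).
$$
Combined with $(1+\la n)^{\alpha}n^{-\alpha}=\la^{\alpha}(1+O(1/n))$ and part (1), this gives
$$
\frac{\n{\la n}{n}}{n}\sum_{i=1}^{n}\frac{\la n}{(\la n+i)\n{\la n}{i}}=(1+\tfrac{1}{\la})^{\alpha}\cdot\frac{\la}{\alpha}\Bigl(1-\bigl(\tfrac{\la}{1+\la}\bigr)^{\alpha}\Bigr)+O(1/n)=\m+O(1/n),
$$
after recognizing $(1+1/\la)^{\alpha}(\la/(1+\la))^{\alpha}=1$.

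The only real technical points are the \emph{uniformity in $i$} of the Gamma expansion (handled by the fact that $\la n\le i+\la n\le(1+\la)n$, so all the implicit constants in the $O(1/n)$ are controlled by $\la$ and $\alpha$ alone) and verifying that the $O(1)$ error in the Riemann-sum approximation of a sum of size $\Theta(n)$ yields an $O(1/n)$ error once one divides by $n$. Neither is a serious obstacle—the main delicate calculation is bookkeeping these errors through the product of three asymptotic factors.
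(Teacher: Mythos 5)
Your proposal is correct and follows essentially the same route as the paper: both parts are reduced to the Tricomi--Erd\'elyi expansion of $\Gamma(z+a)/\Gamma(z+b)$ (applied uniformly since $\la n \le \la n + i \le (1+\la)n$) together with an $O(1/n)$ right-Riemann-sum approximation of $\int_0^1(\la+x)^{-1-\alpha}\,{\rm d}x$, and the final algebra recovering $\m$ matches the paper's. The only cosmetic difference is that the paper first factors the common Gamma ratio out of the sum before expanding, whereas you expand each $1/\n{\la n}{i}$ in place; this changes nothing substantive.
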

We postpone the proof of the above result to the end of this section. For now, we focus on showing Theorem~\ref{t:lln}.
\begin{proof}[Proof of Theorem \ref{t:lln}] We begin adverting the reader that throughout this proof and the others to come, we will let $C$ denote a constant depending only on $\al$ and $\la$ (and possibly some auxiliary $\epsilon$) that may change from line to line.
\newline

\noindent{\underline{Case $\al \in (0,1)$}.}
Now, setting $\theta = \lambda n$, $j=n$, by Proposition \ref{p:useful_martingale} we have that
$$
\frac{Z_{\la n,j}}{\psi_{\la n, j}} = \frac{Z_{\la n,j}}{(\la n +\al)\phi_{\la n, j}}
$$
as a process on $j$ is a martingale whose increments are bounded by
\begin{equation}\label{eq:boundDeltaZpsi0}
    \begin{split}
    \left|\frac{Z_{\la n,j+1}}{\psi_{\la n,j+1}} - \frac{Z_{\la n,j}}{\psi_{\la n,j}}\right| & \le \frac{2}{\psi_{\la n,j+1}} \\
    & = \frac{2\Gamma(\la n +1 + \al)\Gamma(\la n + j +1 )}{(\la n +\al)\Gamma(\la n +1)\Gamma(\la n + j + 1+ \al )}. 
    \end{split}
\end{equation}
Using the following expansion showed by F.G. Tricomi and A. Erd\'{e}lyi in \cite{tricomi1951asymptotic}:
\begin{equation}\label{eq:gamma_expansion}
    \frac{\Gamma(z+a)}{\Gamma(z+b)} = z^{a-b}\left( 1+ \frac{(a-b)(a+b-1)}{2z} + O\left(|z|^{-2}\right) \right).
\end{equation}
taking $z$ as $\la n +j$ and $\la n$ on \eqref{eq:boundDeltaZpsi0} yields
\begin{equation}\label{eq:boundDeltaZpsi}
    \begin{split}
    \left|\frac{Z_{\la n,j+1}}{\psi_{\la n,j+1}} - \frac{Z_{\la n,j}}{\psi_{\la n,j}}\right| & \le \frac{(\la n)^\al}{(\la n +\al)(\la n + j)^\al}\left( 1+ O(n^{-1}) \right)\\
    & \le  \frac{C}{(\la n + \al)^{1-\al}(\la n + j)^{\al}},
    \end{split}
\end{equation}
for some positive constant $C$ that depends on $\la$ and $\al$ only. Bounding the sum by $n$ times its largest term, we can obtain that
\begin{equation}\label{eq:boundDeltaZsquared}
    \sum_{j=1}^{n-1} \left|\frac{Z_{\la n,j+1}}{\psi_{\la n,j+1}} - \frac{Z_{\la n,j}}{\psi_{\la n,j}}\right|^2 \le \frac{C}{n},
\end{equation}
for some constant depending on $\la$ and $\al$ only. Thus, by Azuma's inequality
$$
P\left( \left|\frac{Z_{\la n,n}}{\psi_{\la n,n}} - 1\right| \ge \varepsilon \right) \le 2\exp \left \lbrace - \frac{\varepsilon^2n}{2C} \right \rbrace,
$$
which, by the first Borel-Cantelli lemma, implies that $\frac{Z_{\la n,n}}{\psi_{\la n,n}}$ converges to $1$ a.s. as $n$ goes to infinity. To show the convergence of $\K{\la n}{n}$ we use the convergence we have just proved, the definition of $Z_{\la n,n}$ and the first part of the technical Lemma~\ref{l:sum_phi_asymp} to obtain
$$
\frac{\K{\la n}{n}}{(\la n + \al)\phi_{\la n,n}} \longrightarrow \frac{1}{\al} - \frac{\la^\al}{\al(1+\la)^\al}.
$$
With the above convergence in mind, we can write
$$
\frac{\K{\la n}{n}}{n} = \frac{(\la n + \al)\phi_{\la n, n}}{n}\frac{\K{\la n}{n}}{(\la n + \al)\phi_{\la n, n}} \longrightarrow \la\left(1+\frac{1}{\la} \right)^\al \left[ \frac{1}{\al} - \frac{\la^\al}{\al(1+\la)^\al} \right],
$$
which finishes the proof, since the limit is exactly $\m$, as desired.
\newline

\noindent{\underline{Case $\al = 0.$}} Recall the discussion at Section \ref{s:gcrp} on Page \pageref{s:gcrp}. In this regime, $\K{\la n}{n}$ is the sum of $n$ independent Bernoulli random variables
\begin{equation}\label{def:Kalzero}
    \K{\la n}{n} = \sum_{j=1}^{n-1} {\rm Ber}\left( \frac{\la n}{\la n+j}\right).
\end{equation}
Then, by the Strong Law of Large Numbers for sum of independent random variables with uniformly bounded fourth moment, we have that
$$
\frac{\K{\la n}{n}-E\K{\la n}{n}}{n} \to 0,
$$
almost surely. The proof then follows from Lemma \ref{l:sum_phi_asymp}, Part $2$, together with the fact that when $\al = 0$, $\n{\la n}{i} = 1$ for all $i$ and $n$, which implies that
$$
 \frac{1}{n}\sum_{i=1}^n\frac{\lambda n}{\lambda n +i} = \frac{E\K{\la n}{n}}{n} \to \frak{m}_{\la,0},
$$
at a rate of $O(1/n)$.
\end{proof}
\subsection{Proof of Technical Lemma \ref{l:sum_phi_asymp}}
To conclude the proof of Theorem \ref{t:lln}, we show Lemma \ref{l:sum_phi_asymp}, but before, we will show a simple and known result that will be very useful in other parts of the paper.
\begin{lemma}[Riemann Sum Error Bound]\label{l:integral_approx}
Let \( f: [0,1] \to \mathbb{R} \) be a positive $C^1$ function having a bounded derivative. Then, for any Riemann sum $M_n$ with $\Delta x=1/n$, the following holds
\[
\left| M_n - \int_0^1f(x){\rm d}x \right| \le \frac{\sup_{x\in [0,1]}|f'(x)|}{n}.
\]
\end{lemma}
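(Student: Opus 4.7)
The plan is to reduce the claim to a term-by-term comparison of the Riemann sum with the integral, using the Mean Value Theorem on each subinterval. Partition $[0,1]$ into the $n$ equal pieces $I_i = [(i-1)/n, i/n]$, each of length $1/n$. Any Riemann sum with $\Delta x = 1/n$ takes the form $M_n = \frac{1}{n}\sum_{i=1}^n f(\xi_i)$ for some choice of sample points $\xi_i \in I_i$. Writing the integral as $\int_0^1 f(x)\,dx = \sum_{i=1}^n \int_{I_i} f(x)\,dx$, I can express the difference as
\[
M_n - \int_0^1 f(x)\,dx \;=\; \sum_{i=1}^n \int_{I_i} \bigl( f(\xi_i) - f(x) \bigr)\,dx.
\]

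Next, on each subinterval $I_i$ both $\xi_i$ and $x$ lie within a window of length $1/n$, so by the Mean Value Theorem $|f(\xi_i) - f(x)| \le \sup_{[0,1]} |f'| \cdot |\xi_i - x| \le \sup_{[0,1]} |f'|/n$. Applying the triangle inequality to the above display and plugging in this pointwise bound gives
\[
\left| M_n - \int_0^1 f(x)\,dx \right| \;\le\; \sum_{i=1}^n \int_{I_i} \frac{\sup_{[0,1]} |f'|}{n}\,dx \;=\; n \cdot \frac{1}{n} \cdot \frac{\sup_{[0,1]}|f'|}{n} \;=\; \frac{\sup_{[0,1]}|f'|}{n},
\]
which is the claimed bound. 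There is essentially no obstacle here: positivity of $f$ and the $C^1$ hypothesis are only used to guarantee that $\sup|f'|$ is finite and that the Mean Value Theorem applies, and the result is truly independent of which sample points $\xi_i$ define the Riemann sum.
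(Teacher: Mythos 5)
Your proof is correct and follows essentially the same route as the paper: both compare the Riemann sum with the integral subinterval by subinterval and use the Mean Value Theorem to bound $|f(y)-f(z)|$ by $\sup|f'|/n$ on each piece. The only cosmetic difference is that the paper first invokes the integral mean value theorem to replace $\int_{I_i} f$ by $f(\xi_i)/n$ before comparing function values, whereas you bound the integrand pointwise; the two are equivalent.
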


\begin{proof}
Let \( \Delta x = 1/n \) and partition the interval \([0,1]\) into \( n \) subintervals
\[
\left[\frac{i-1}{n}, \frac{i}{n}\right], \quad i=1,2,\dots,n.
\]
By the Integral Mean Value Theorem, it follows that
$$
\int_{(i-1)/n}^{i/n}f(x){\rm d}x = \frac{f(\xi_i)}{n},
$$
for some $\xi_i$ in the $i$-th subinterval. Then, for any Riemann sum $M_n$ with $\Delta x = 1/n$, triangle inequality gives us
$$
\left| M_n - \int_0^1f(x){\rm d}x\right| = \left| \frac{1}{n}\sum_{i=1}^nf(x_i^*) - f(\xi_i) \right| \le \frac{1}{n}\sum_{i=1}^n|f(x_i^*) - f(\xi_i)|.
$$
Finally, by the Mean Value Theorem, we have that
$$
|f(x_i^*) - f(\xi_i)| \le \frac{|f'(x_i^{**})|}{n},
$$
which combined to the previous inequality proves the result.
\end{proof}
We can finally prove Lemma~\ref{l:sum_phi_asymp}.
\begin{proof}[Proof of Lemma \ref{l:sum_phi_asymp}] We start from part $1$. \\ 

\noindent{\underline{Proof of Part $(1)$}} This part is a consequence of the definition of $\n{\la n}{n}$ and the expansion of the ration of gamma functions given at \eqref{eq:gamma_expansion}. Indeed,
$$
\n{\la n}{n} = \nExpr{\lambda n}{n} = \frac{1}{\la^\al n^\al}(1+\la)^\al n^{\al}\left( 1+ O\left( \frac{1}{n}\right) \right),
$$
which shows this part of the lemma for $\al \in (0,1)$. As for $\al =0$, the lemma is trivially satisfied, since in this case, $\n{\la n}{n} = 1$. \\

\noindent{\underline{Proof of Part $(2)$}} In this part, we will rely again on the expansion given at \eqref{eq:gamma_expansion}. Using the expression of $\n{n}{j}$ in terms of $\Gamma$, we obtain that
\begin{equation}\label{eq:sum_with_gamma}
            \frac{\n{\la n}{n}}{n}\sum_{i=1}^n\frac{\lambda n}{(\lambda n +i)\n{\la n}{i}}   = \frac{\la\n{\la n}{n}\Gamma(\la n+\al + 1)}{\Gamma(\la n+ 1)}\sum_{i=1}^n\frac{\Gamma(\la n + i)}{(\la n + i)\Gamma(\la n +i+\al) }.
\end{equation}
Now, we will focus on the asymptotics of the sum on the RHS of the above equality. By \eqref{eq:gamma_expansion} with $z= \la n + i$, $b=\al$ and $a = 0$, yields
\begin{equation}\label{eq:sum_expanded}
    \sum_{i=1}^n\frac{\Gamma(\la n + i )}{(\la n + i)\Gamma(\la n +i+\al) } = \sum_{i=1}^n\frac{1}{(\la n + i)^{1+\al}}\left( 1+ \frac{\al(1-\al)}{2(\la n + i)} + O\left((\la n + i)^{-2}\right) \right).
\end{equation}
Notice that, for $\al \in [0,1)$,
\begin{equation*}
    \begin{split}
        \sum_{i=1}^n\frac{1}{(\la n + i)^{1+\al}} = \frac{1}{n^\al}\sum_{i=1}^n\frac{1}{n(\la + i/n)^{1+\al}} =: \frac{1}{n^\al}R_n,
    \end{split}
\end{equation*}
where $R_n$ is a right Riemann sum of the function $f(x) = 1/(\la + x)^{1+\al}$ over $[0,1]$. Now, let $\frak{a}_{\la,\al}$ be the following quantity
\begin{equation}\label{def:a}
    \frak{a}_{\la,\al} := \begin{cases}
        \frac{1}{\al}\left( \frac{1}{\la^\al} - \frac{1}{(1+\la)^\al}\right), & \text{when }\al \in (0,1)\\
        \log(1+\la) - \log(\la), & \text{when }\al=0.
    \end{cases}
\end{equation}
Then, by Lemma~\ref{l:integral_approx}
\begin{equation*}
    R_n = \int_0^1\frac{{\rm d}x}{(\la + x)^{1+\al}} + O(n^{-1}) = \frak{a}_{\la,\al} + O(n^{-1}).
\end{equation*}
We can then get back to \eqref{eq:sum_expanded} and write
$$
\sum_{i=1}^n\frac{\Gamma(\la n + i +1)}{(\la n + i)\Gamma(\la n +i+1+\al) } = \frac{\frak{a}_{\la,\al}}{n^{\al}} + O(n^{-1-\al}),
$$
where we bounded the other sums that appear in \eqref{eq:sum_expanded} by their integral counterparts to obtain terms of order at most $n^{-1-\al}$. Finally, getting back to \eqref{eq:sum_with_gamma}, and expanding the factors in $\Gamma$ again using \eqref{eq:gamma_expansion}, yields 
$$
\frac{\n{\la n}{n}}{n}\sum_{i=1}^n\frac{\lambda n}{(\lambda n +i)\n{\la n}{i}} = \la\cdot\frac{(1+\la)^\al}{\la^\al}\cdot \la^\al n^\al\cdot \frac{\frak{a}_{\la,\al}}{ n^\al} + O(n^{-1}), 
$$
which proves the lemma.
\end{proof}

\section{Central Limit Theorem for $\K{\la n}{n}$}\label{s:clt}
In this section, we provide a proof of Theorem~\ref{t:clt}. The key technical ingredient in our argument is Theorem~\ref{t:hallheyde}, which gives explicit bounds on the rate of convergence in the CLT for martingale sequences. Our approach mirrors that of the Strong Law of Large Numbers: we first establish the result for the normalized statistic $Z_{\lambda n, n}/\psi_{\lambda n, n}$, and subsequently extend the conclusion to $K_n^{(\lambda n)}$ through appropriate scaling and shifting transformations.

Throughout this section, we will have many different processes and indexes, thus, to avoid clutter, let us begin by introducing some additional notation. We introduce the constants
\begin{equation}\label{def:sigma}
    \s^2_{\la,\al} := \frac{\al}{\la}\left(1 - \frac{\la^\al}{(1+\la)^\al} - \frac{\al}{1+\la}\right) \; \text{and} \;\sk^2 := \begin{cases}
        \frac{\la^2\s^2_{\la,\al}(1+1/\la)^{2\al}}{\al^2},& \al \in(0,1);\\
        \la \log\left(\frac{1+\la}{\la}\right) - \frac{\la}{1+\la}, &\al = 0,
        \end{cases}
\end{equation}
together with following processes
\begin{equation}\label{def:YjXj}
  Y_{n,j}:= \frac{Z_{\la n,j}}{\psi_{\la n,j}} \quad \text{and}\quad  X_{n,j} := \frac{\sqrt{n}}{\s_{\la,\al}}\left(Y_{n,j} - Y_{n,j-1} \right).
\end{equation}
Recall that $EY_{n,j}=1$, for all $j$ and $n$, and that, for fixed $n$, $\{X_{n,j}\}_j$ is a martingale difference. Also let $S_{n,j}$ be
\begin{equation}\label{def:Sn}
    S_{n,j} = \frac{\sqrt{n}}{\s_{\la,\al}}\left(Y_{n,j} -1\right) = \sum_{i=1}^jX_{n,i},
\end{equation}
which is a mean-zero martingale, as a process on $j$, by virtue of  Proposition \ref{p:useful_martingale}. Finally, we introduce the conditional variance of $S_{n,j}$
\begin{equation}\label{def:Vn2}
    V_n^2 = \sum_{j=1}^nE\left[ X_{n,j}^2\; \middle | \; \F_{n,j}\right].
\end{equation}

It is important to highlight that only $\sk$ has a definition for $\al=0$. The other quantities and processes are defined for $\al \in (0,1)$ only.
The proof of Theorem \ref{t:clt} will follow from the two results below
\begin{proposition}[Berry-Esseen bounds for $S_{n,n}$]\label{p:beSn}Let $F_n$ be the CDF of $S_{n,n}$. Then, for any $\epsilon > 0 $ there exists a constant $C$ depending on $\epsilon, \la$ and $\al$ only, such that
$$
\sup_{-\infty < x < \infty}|F_n(x) - \Phi(x)| \le Cn^{-1/5 + \epsilon/5}.
$$
\end{proposition}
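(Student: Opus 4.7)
The plan is to apply the quantitative martingale CLT of Theorem~\ref{t:hallheyde} directly to the mean-zero martingale $\{S_{n,j}\}_{1 \le j \le n}$ with differences $X_{n,j}$ and conditional-variance sum $V_n^2$. Bounds of this Hall--Heyde / Haeusler type typically take the form
\begin{equation*}
\sup_x |F_n(x) - \Phi(x)| \le C_p \left(\sum_{j=1}^n E|X_{n,j}|^{2p} + E|V_n^2 - 1|^p\right)^{1/(2p+1)}
\end{equation*}
for $p \ge 1$; choosing $p = 2$ yields a $1/5$-type exponent that matches the target rate exactly. It therefore suffices to establish $\sum_j E X_{n,j}^4 \le Cn^{-1}$ and $E(V_n^2 - 1)^2 \le Cn^{-1+\epsilon}$, and the small $\epsilon$ in the conclusion should enter through the second estimate only.

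The fourth-moment piece is immediate. Combining item~(2) of Proposition~\ref{p:useful_martingale} with the Tricomi--Erd\'elyi expansion~\eqref{eq:gamma_expansion} gives $\psi_{\la n, j} \sim (\la n)^{1-\al}(\la n + j)^\al \asymp n$ uniformly for $0 \le j \le n$, so $|X_{n,j}| \le C/\sqrt{n}$ almost surely, and therefore $\sum_{j=1}^n E X_{n,j}^4 \le Cn^{-1}$.

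The bulk of the work is the estimate on $E(V_n^2 - 1)^2$. From item~(3) of Proposition~\ref{p:useful_martingale},
\begin{equation*}
V_n^2 = \sum_{j=1}^n \frac{n\al^2}{\s^2_{\la,\al}\,\psi_{\la n, j}^2} \cdot \frac{Z_{\la n, j-1}}{\la n + j - 1}\left(1 - \frac{Z_{\la n, j-1}}{\la n + j - 1}\right),
\end{equation*}
and I would split $V_n^2 - 1 = (EV_n^2 - 1) + (V_n^2 - EV_n^2)$. For the deterministic bias, writing $Z_{\la n, j-1} = \psi_{\la n, j-1} Y_{n,j-1}$, taking expectations, and approximating the resulting Riemann-type sum by an explicit integral in $(\la + x)$ via Lemma~\ref{l:integral_approx} and~\eqref{eq:gamma_expansion}, one finds that the normalization $\s^2_{\la,\al}$ in~\eqref{def:sigma} is designed precisely so that the leading order is $1$, giving $|EV_n^2 - 1| = O(n^{-1})$, in complete parallel with the computations of Lemma~\ref{l:sum_phi_asymp}. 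For the variance of $V_n^2$ I would linearize each summand around $Y_{n,j-1} = 1$; the $L^2$ control ${\rm Var}(Y_{n,j-1}) = O(j/n^2)$, obtained by summing the conditional-variance estimate from item~(3) of Proposition~\ref{p:useful_martingale}, bounds the quadratic remainder. The dominant linear contribution turns into a double sum involving ${\rm Cov}(Y_{n,i-1}-1, Y_{n,j-1}-1)$, which, since $Y_{n,\cdot}$ is a martingale with $EY_{n,j}=1$, equals ${\rm Var}(Y_{n,\min(i,j)-1})$; this identity telescopes the double sum into a single weighted sum of order $n^{-1}$.

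The main obstacle is precisely this variance estimate: a naive Cauchy--Schwarz on the double sum would overshoot by a factor of $n$, so the telescoping covariance identity is essential. The $n^\epsilon$ loss responsible for the $\epsilon$ in the stated rate should arise because the linearization is uniformly controlled only on the event $\{\sup_j |Y_{n,j} - 1| \le n^{-1/2+\epsilon}\}$, whose complement I would bound via Azuma's inequality applied to $Y_{n,j}$ exactly as in the proof of Theorem~\ref{t:lln}. Assembling these estimates in Theorem~\ref{t:hallheyde} with $p=2$ then yields $\sup_x|F_n(x)-\Phi(x)| \le C(n^{-1} + n^{-1+\epsilon})^{1/5} \le Cn^{-1/5+\epsilon/5}$, as required.
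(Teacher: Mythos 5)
Your proposal is correct and shares the paper's skeleton: apply Theorem~\ref{t:hallheyde} with $\delta=1$ (your $p=2$), kill the fourth-moment sum via the almost-sure increment bound $|X_{n,j}|\le C/\sqrt{n}$ coming from item~(2) of Proposition~\ref{p:useful_martingale} and the Gamma-ratio expansion, and reduce everything to $E|V_n^2-1|^2$. Where you genuinely diverge is in that last estimate. The paper centers $V_n^2$ at the \emph{deterministic} quantity $\s_n^2/\s_{\la,\al}^2$ (whose distance to $1$ is $O(n^{-1})$ by Lemma~\ref{l:sn}), expresses $V_n^2$ through the $Y_{n,j}$'s (Lemma~\ref{l:Vnexpre}), and then bounds $|V_n^2-\s_n^2/\s_{\la,\al}^2|$ \emph{pathwise} by $C\sup_{j\le n}|Y_{n,j}-1|=O(n^{-1/2+\epsilon})$ on the Azuma-good event, which after squaring gives $O(n^{-1+2\epsilon})$. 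You instead center at $EV_n^2$ and compute ${\rm Var}(V_n^2)$ by linearizing in $Y_{n,j}-1$ and telescoping the double sum via ${\rm Cov}(Y_{n,i}-1,Y_{n,j}-1)={\rm Var}(Y_{n,\min(i,j)})=O(\min(i,j)/n^2)$, reserving the good event only for the quadratic remainder. This is more work (you need the martingale covariance identity and a correct accounting of the $E[Y_{n,j}^2]$ bias, both of which you handle correctly), but it is also sharper: carried through, it gives $E(V_n^2-1)^2=O(n^{-1})$ with no $\epsilon$ loss at all, so you could in fact state the clean rate $Cn^{-1/5}$ rather than $Cn^{-1/5+\epsilon/5}$ — a small improvement over the paper. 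The only soft spot is that you defer the verification that the deterministic leading term equals $1+O(n^{-1})$ to a computation "parallel to Lemma~\ref{l:sum_phi_asymp}"; that is exactly the content of the paper's Lemma~\ref{l:sn} and does need to be carried out, but it is routine Riemann-sum bookkeeping.
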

The second result is a technical lemma whose proof is quite short, for this reason, we state it and prove it right away.
\begin{lemma}\label{l:philip}Let $(a_n)_n,(b_n)_n$ be two sequences of real numbers such that $a_n \to 0$ and $b_n \to 1$. Then,
$$
\sup_{x\in \mathbb{R}}|\Phi((x-a_n)/b_n) -\Phi(x)| = O(|a_n| + |1-b_n|)
$$
\end{lemma}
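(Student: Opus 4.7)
The plan is to exploit that $\Phi$ is differentiable with derivative the standard normal density $\varphi(x)=(2\pi)^{-1/2}e^{-x^2/2}$, and to combine a Lipschitz-style estimate from the mean value theorem with the Gaussian decay of $\varphi$ at infinity.

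First I would apply the mean value theorem to write
$$
\Phi\!\left(\frac{x-a_n}{b_n}\right) - \Phi(x) \;=\; \varphi(\xi_x)\cdot\frac{(1-b_n)x - a_n}{b_n},
$$
for some $\xi_x$ lying between $x$ and $(x-a_n)/b_n$. Since $b_n\to 1$, for all $n$ large enough we have, say, $b_n \ge 1/2$, so the denominator is bounded below and the task reduces to controlling
$$
\varphi(\xi_x)\bigl(|x|\,|1-b_n|+|a_n|\bigr)
$$
uniformly in $x\in\mathbb{R}$.

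Next I would split the supremum into two regions according to the size of $|x|$. For $|x|\le 1$, the trivial bounds $\varphi(\xi_x)\le (2\pi)^{-1/2}$ and $|x|\le 1$ immediately yield an $O(|1-b_n|+|a_n|)$ estimate. For $|x|>1$ and $n$ large, $\xi_x$ lies close to $x$ (concretely $|\xi_x|\ge |x|/2$ once $|1-b_n|$ and $|a_n|$ are small enough), so $\varphi(\xi_x)\le \varphi(|x|/2)$. The key observation is then that $\sup_{x\in\mathbb{R}} |x|\,\varphi(|x|/2)<\infty$, since the Gaussian factor dominates any polynomial. This gives $|x|\,\varphi(\xi_x)\le C$, whence $|x|\,|1-b_n|\,\varphi(\xi_x)\le C|1-b_n|$, while $|a_n|\,\varphi(\xi_x)\le (2\pi)^{-1/2}|a_n|$. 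Combining the two regions yields the desired uniform bound.

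The main obstacle is precisely the unbounded factor $|x|$ multiplying $|1-b_n|$, which would defeat a plain Lipschitz-constant bound for $\Phi$; the Gaussian tail of $\varphi$ is what absorbs it. Everything else amounts to routine bookkeeping once $n$ is large enough for $b_n$ to be comparable to $1$.
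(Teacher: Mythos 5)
Your proposal is correct and follows essentially the same route as the paper: apply the mean value theorem to produce the factor $\varphi(\xi_x)\,|x(1-b_n)-a_n|/|b_n|$, then split on the size of $|x|$ and use the Gaussian decay of $\varphi$ at the intermediate point to absorb the unbounded factor $|x|$. Your version is in fact slightly more explicit than the paper's in justifying why $|x|\varphi(\xi_x)$ stays bounded (via $|\xi_x|\ge|x|/2$ for large $n$), but the argument is the same.
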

\begin{proof} The result is a consequence of the Mean Value Theorem. In fact, for a given $x$, the Mean Value Theorem guarantees the existence of some $t_{x,n}\in [(x-a_n)/b_n,x]$ (or in $[x,(x-a_n)/b_n]$), such that
\begin{equation*}
    \begin{split}
        |\Phi((x-a_n)/b_n) -\Phi(x)| &  = \frac{1}{\sqrt{2\pi}}e^{-t_{x,n}^2/2}\frac{|x(1-b_n) -a_n|}{|b_n|}\\ 
        & \le C(|x|e^{-t^2_{x,n}/2}|1-b_n| + |a_n|),
    \end{split}
\end{equation*}
for some absolute constant $C$. This is enough to conclude the lemma, since for $|x|<M$, we can bound $|x|$ by $M$, and in the case $|x|\ge M$, the factor $|x|e^{-t_{x,n}^2}$ is also bounded, since $t_{x,n}\in [(x-a_n)/b_n,x]$ (or in $[x,(x-a_n)/b_n]$).
\end{proof}
As for the proof of Proposition \ref{p:beSn}, we postpone it to the end of this section. For now, we will focus on showing how our main result follows from it and Lemma \ref{l:philip}.
\begin{proof}[Proof of Theorem \ref{t:clt}] We begin by the case $\al \in (0,1)$.\\

\noindent{\underline{Case $\al\in(0,1)$.}}
The proof follows by a proper transformation of $\K{\la n}{n}$. Indeed, 
\begin{equation}\label{eq:first_algebra}
    \begin{split}
        \frac{\sqrt{n}}{\sk}\left( \frac{\K{\la n}{n}}{n} - \m\right) & = \frac{\sqrt{n}}{\sk}\left( \frac{\al\K{\la n}{n}}{\al n} - \m\right)\\
        & = \frac{\sqrt{n}}{\sk}\left( \frac{\la n + \al\K{\la n}{n}}{\al n} - \m - \frac{\la}{\al}\right)\\
        &= \frac{\sqrt{n}}{\sk}\left[ \frac{\psi_{\la n,n}}{\al n}\left(\frac{\la n + \al\K{\la n}{n}}{\psi_{\la n,n}} - 1\right) \right]\\
        & \quad + \frac{\sqrt{n}}{\sk}\left[ \frac{\psi_{\la n, n}}{\al n} -\m - \frac{\la}{\al}\right].
    \end{split}
\end{equation}
We will take care of the second term in the right hand side of the last equation. Recall that $\psi_{\la n,j} = (\la n + \al)\n{\la n}{n}$ and that for $\al \in (0,1)$
$$
\m =  \frac{\lambda}{\alpha}\left[\left( 1 + \frac{1}{\lambda}\right)^{\alpha} -1 \right],
$$
which combined imply
    \begin{equation*}
        \begin{split}
        \frac{\sqrt{n}}{\sk}\left( \frac{\psi_{\la n, n}}{\al n} -\m - \frac{\la}{\al}\right) & = \frac{\sqrt{n}}{\sk}\left[ \frac{(\la n + \al)\phi_{\la n, n}}{\al n} -\frac{\la}{\al}\left(1+\frac{1}{\la}\right)^\al\right]\\
       & = \frac{\sqrt{n}}{\sk}\left[\frac{\la}{\al}\left( \phi_{\la n,n} - \left(1+\frac{1}{\la}\right)^\al \right) + \frac{\phi_{\la n,n}}{n}\right] 
        \end{split}
    \end{equation*}
Thus, a triangle inequality and Part $1$ of the technical Lemma \ref{l:sum_phi_asymp}, yield
\begin{equation}\label{eq:Osqrtn}
    \left| \frac{\sqrt{n}}{\sk}\left( \frac{\psi_{\la n, n}}{\al n} -\m - \frac{\la}{\al}\right)  \right| = O\left( \frac{1}{\sqrt{n}} \right).
\end{equation}
As for the first term in the  right hand side of \eqref{eq:first_algebra}, we have
\begin{equation}\label{eq:s}
    \begin{split}
     \frac{\sqrt{n}}{\sk}\left[ \frac{\psi_{\la n,n}}{\al n}\left(\frac{\la n + \al\K{\la n}{n}}{\psi_{\la n,n}} - 1\right) \right] & = \frac{\s_{\la,\al}\psi_{\la n,n}}{\sk \al n}\left[ \frac{\sqrt{n}}{\s_{\la,\al}}\left(\frac{\la n + \al\K{\la n}{n}}{\psi_{\la n,n}} - 1\right) \right] \\
     & = \frac{\s_{\la,\al}\psi_{\la n,n}}{\sk \al n}S_{n,n}.
    \end{split}
\end{equation}
Combining the above with \eqref{eq:Osqrtn} and returning to \eqref{eq:first_algebra} we can write
\begin{equation}\label{eq:knO}
 \frac{\sqrt{n}}{\sk}\left( \frac{\K{\la n}{n}}{n} - \m\right) = \frac{\s_{\la,\al}\psi_{\la n,n}}{\sk \al n}S_{n,n} + O\left( \frac{1}{\sqrt{n}} \right),
\end{equation}
which by Proposition \ref{p:beSn}, technical Lemma \ref{l:sum_phi_asymp} and Slutsky's theorem give us that  the process $\frac{\sqrt{n}}{\sk}\left( \frac{\K{\la n}{n}}{n} - \m\right)$ converges in distribution to a standard normal, since
$$
\frac{\s_{\la,\al}\psi_{\la n,n}}{\sk\al n} = \frac{\s_{\la,\al}(\la n+\al)\phi_{\la n,n}}{\sk \al n} \to \frac{\frac{\la \s_{\la,\al}(1+1/\la)^\al}{\al}}{\sk} = 1.
$$
The rate of convergence requires some extra work. Let $F_{K_n}$ and $F_{S_n}$ denote the CDF of $\frac{\sqrt{n}}{\sk}\left( \frac{\K{\la n}{n}}{n} - \m\right)$ and $S_{n,n}$, respectively. Notice that by \eqref{eq:knO}, we have
\begin{equation}\label{eq:FkFs}
    F_{K_n}(x) = F_{S_n}\left(\frac{x-a_n}{b_n} \right),
\end{equation}
where 
\begin{equation}\label{eq:anbn}
a_n = O\left( \frac{1}{\sqrt{n}}\right) \quad \text{and} \quad b_n = \frac{\s_{\la,\al}\psi_{\la n,n}}{\sk\al n}.
\end{equation}
Now, for a given $x$, an application of triangle inequality together with Proposition~\ref{p:beSn} and technical Lemma \ref{l:philip} leads us to
\begin{equation*}
    \begin{split}
        |F_{K_n}(x) - \Phi(x)| & = \left|F_{S_n}\left(\frac{x-a_n}{b_n}\right) - \Phi(x)\right|\\
        & \le \left|F_{S_n}\left(\frac{x-a_n}{b_n}\right) - \Phi\left(\frac{x-a_n}{b_n}\right)\right| + \left|\Phi\left(\frac{x-a_n}{b_n}\right)-\Phi(x)\right| \\
        & \le An^{-1/5 + \epsilon/5} +O\left(\frac{1}{\sqrt{n}}\right) + O(|1-b_n|).\\
    \end{split}
\end{equation*}
To conclude the proof, just notice that by technical Lemma \ref{l:sum_phi_asymp} and the definition of~$\sk$ given at~\eqref{def:sk} on Page~\pageref{def:sk}, we have
\begin{equation*}
    \begin{split}
        \left|1 - \frac{\s_{\la,\al}\psi_{\la n,n}}{\sk\al n}\right| &= \left|1 - \frac{\s_{\la,\al}(\la n + \al) \phi_{\la n,n}}{\sk\al n}\right| \\
        & =  \left|1 - \frac{\s_{\la,\al}\left(\frac{\la}{\al}+ \frac{\al}{n}\right) \phi_{\la n,n}}{\sk}\right|\\
        & \le \frac{1}{\sk}\left|\s_{\la,\al}\frac{\la}{\al}\left[\left(1+\frac{1}{\la} \right)^\al - \phi_{\la n,n} \right] \right| + O\left( \frac{1}{n}\right)\\
        & = O\left( \frac{1}{n}\right),
    \end{split}
\end{equation*}
which allows us to obtain
$$
|F_{K_n}(x) - \Phi(x)| \le An^{-1/5+ \epsilon/5} + O\left(\frac{1}{\sqrt{n}}\right) + O\left(\frac{1}{n}\right) = O\left(n^{-1/5+ \epsilon/5}\right),
$$
as desired. \\

\noindent{\underline{Case $\al = 0$.}} In this regime, we are under essentially classical settings: $\K{\la n}{n}$ is sum of independent Bernoulli random variables. For this reason, we will not fill all the details. The proof is done by applying the classical result due to Petrov stated at Theorem~\ref{t:petrov}.

In order to apply Theorem~\ref{t:petrov}, let us introduce the following notation
$$
X_{n,j} := {\rm Ber}\left( \frac{\la n}{\la n + j}\right) - \frac{\la n}{\la n + j}; \quad \s_{n,j}^2 := EX_{n,j}^2; \quad \s_n^2 := \sum_{j=1}^n\s_{n,j}^2
$$
and 
$$
L_n := \s_n^{-3/2}\sum_{j=1}^nE|X_{n,j}|^3 \le n\s_{n}^{-3/2},
$$
almost surely, since $X_{n,j} \le 1$. Notice that approximating the sum by the integral using Lemma \ref{l:integral_approx}, gives us that
\begin{equation*}
    \begin{split}
        \frac{\s_n^2}{n} & = \frac{1}{n}\sum_{j=1}^n\frac{\la n}{\la n +j} - \frac{(\la n)^2}{(\la n + j)^2}\\
        & = \la\sum_{j=1}^n \frac{1}{n}\cdot \frac{1}{(\la + j/n)} - \la^2\sum_{j=1}^n\frac{1}{n}\cdot\frac{1}{(\la + j/n)^2}\\ 
        & = \la \int_0^1\frac{{\rm d}x}{\la +x} - \la^2\int_0^1\frac{{\rm d}x}{(\la +x)^2} +O\left(\frac{1}{n}\right)\\
        & = \la \log(1+1/\la) - \frac{\la}{1+\la} + O\left(\frac{1}{n}\right),
    \end{split}
\end{equation*}
which implies that
\begin{equation}\label{eq:sn2}
\s_n^2 = \frak{s}^2_{\la,0}n + O(1).
\end{equation}
Thus, by Theorem \ref{t:petrov}, we have that
\begin{equation*}
    \sup_{x\in \mathbb{R}}\left| P\left( \frac{\K{\la n}{n}-E\K{\la n}{n}}{\s_n} \le x\right)  - \Phi(x)\right| \le Cn^{-1/2},
\end{equation*}
for some constant $C = C(\la)$. Extending the result to $\sqrt{n}/\frak{s}_{\la,0}(\K{\la n}{n}/n - \m)$ is very similar to the argument given in the case $\al \in (0,1)$. It is enough to notice that by Lemma \ref{l:sum_phi_asymp}, Part $2$, and \eqref{eq:sn2} we have that
$$
|E\K{\la n}{n}/n - \frak{m}_{\la,0}| = O(n^{-1}) \quad \text{and} \quad \left| \frac{\s_n}{\frak{s}_{\la,0}\sqrt{n}}-1\right| = O\left( \frac{1}{\sqrt{n}} \right)
$$
which combined to Lemma \ref{l:philip} gives us a rate of convergence of order~$1/\sqrt{n}$ as desired.
\end{proof}

\subsection{Proof of Proposition \ref{p:beSn}}
We conclude this section by presenting the proof of Proposition~\ref{p:beSn}. The proposition follows from three intermediate results. For two of these results, all necessary ingredients are already at our disposal, and we will therefore present their proofs immediately. The proof of the third result, however, involves more technical arguments, and thus we defer it to the end of this section.

The first result is a concentration inequality for the variables $Y_{n,j}$.
\begin{lemma}\label{l:concentation_Ynj} Let $Y_{n,j}$ be as in \eqref{def:YjXj}. Then, there exists a positive constant $C$ depending on $\la$ and $\al$ only, such that
$$
P\left(  \left|Y_{n,j} - 1\right| > \varepsilon, \text{ for some }j\le n\right) \le 2ne^{-C\varepsilon^2n}
$$
\end{lemma}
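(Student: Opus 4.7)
The plan is to apply Azuma--Hoeffding pointwise in $j$ and finish by a union bound, since all the required increment control has already been established while proving Theorem \ref{t:lln}.

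First, I recall that for fixed $n$, Proposition \ref{p:useful_martingale} guarantees that $\{Y_{n,j}\}_{j}$ is a martingale with respect to $\{\mathcal{F}_{\lambda n, j}\}_j$, with $Y_{n,1}=Z_{\lambda n,1}/\psi_{\lambda n,1}=1$. Moreover, equation \eqref{eq:boundDeltaZpsi} provides a deterministic bound on its increments,
$$
|\Delta Y_{n,j+1}| \le c_{n,j} := \frac{C}{(\lambda n + \alpha)^{1-\alpha}(\lambda n + j)^{\alpha}},
$$
and equation \eqref{eq:boundDeltaZsquared} already shows that $\sum_{j=1}^{n-1} c_{n,j}^2 \le C/n$ for a constant depending only on $\lambda$ and $\alpha$.

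Next, fixing any $j\le n$, Azuma--Hoeffding applied to the martingale $\{Y_{n,i}\}_{i=1}^{j}$ yields
$$
P\bigl(|Y_{n,j}-1|>\varepsilon\bigr) = P\bigl(|Y_{n,j}-Y_{n,1}|>\varepsilon\bigr) \le 2\exp\!\left(-\frac{2\varepsilon^2}{\sum_{i=1}^{j-1}c_{n,i}^2}\right) \le 2e^{-C\varepsilon^2 n},
$$
since $\sum_{i=1}^{j-1}c_{n,i}^2 \le \sum_{i=1}^{n-1}c_{n,i}^2 = O(1/n)$. A union bound over $j \in \{1,\dots,n\}$ then gives
$$
P\bigl(|Y_{n,j}-1|>\varepsilon \text{ for some }j\le n\bigr) \le 2n\,e^{-C\varepsilon^2 n},
$$
which is exactly the claim. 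The case $\alpha=0$ is trivial: since $Z_{\lambda n,j}=\lambda n$ and $\psi_{\lambda n,j}=\lambda n$, we have $Y_{n,j}\equiv 1$, so the left-hand side vanishes.

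I do not anticipate a real obstacle here: the lemma is essentially a repackaging, as a maximal inequality, of the Azuma estimate already used in the proof of Theorem \ref{t:lln}. The only small point to be careful about is ensuring that the sum of squared increments is bounded by $O(1/n)$ uniformly in the truncation level $j\le n$, which is immediate because the $c_{n,j}^2$ are nonnegative and the full sum up to $n-1$ is already $O(1/n)$.
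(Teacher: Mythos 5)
Your proof is correct and follows essentially the same route as the paper: Azuma's inequality applied to the bounded-increment martingale $\{Y_{n,i}\}_{i\le j}$ using the squared-increment bound \eqref{eq:boundDeltaZsquared}, followed by a union bound over $j\le n$. The added observation that the case $\alpha=0$ is degenerate ($Y_{n,j}\equiv 1$) is a harmless extra, as the general argument already covers it.
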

\begin{proof} We already have all we need for the proof. Firstly, recall from Proposition~\ref{p:useful_martingale} that $\{Y_{n,j}\}_{j}$ is a bounded-increment martingale of mean $1$. Secondly, by \eqref{eq:boundDeltaZsquared}, for any $j\le n$
$$
\sum_{i=1}^j|\Delta Y_{n,i}|^2 \le \frac{C}{n},
$$
for some positive constant $C$ depending on $\la$ and $\al$ only. Finally, by Azuma's inequality
$$
P\left( \left|Y_{n,j} - 1\right| > \varepsilon\right) \le 2e^{-\varepsilon^2n/2C}.
$$
The result follows by union bound over $j\le n$.
\end{proof}
The second intermediate result we need gives us an expression for $V_n^2$ in terms of $Y_{n,j}$'s.
\begin{lemma}\label{l:Vnexpre}Let $V_n^2$ be as in \eqref{def:Vn2}. Then,
$$
V_n^2 = \frac{\al^2n}{\s_{\la,\al}^2}\sum_{j=1}^n \frac{Y_{n,j}}{(\la n+j+\al)\psi_{\la n,j+1}} - \frac{Y^2_{ n,j}}{(\la n+j+\al)^2}
$$
    
\end{lemma}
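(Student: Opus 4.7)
The plan is to unwind the definition of $V_n^2$, apply the conditional variance formula from Proposition~\ref{p:useful_martingale}(3), and then express everything in terms of the $Y_{n,j}$'s using the recursive relation satisfied by $\psi_{\la n, j}$. The computation is essentially bookkeeping; there is no conceptual obstacle, but one must track the index shifts carefully.

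First, by the definition of $X_{n,j}$ at \eqref{def:YjXj} and the definition of $V_n^2$ at \eqref{def:Vn2}, we have
\[
V_n^2 \;=\; \sum_{j=1}^n E\left[X_{n,j}^2 \,\middle|\, \F_{\la n, j-1}\right] \;=\; \frac{n}{\s_{\la,\al}^2}\sum_{j=1}^n E\left[(Y_{n,j} - Y_{n,j-1})^2 \,\middle|\, \F_{\la n,j-1}\right].
\]
Applying Proposition~\ref{p:useful_martingale}(3) with $\te = \la n$ and index $j-1$ in place of $j$ yields
\[
E\left[(Y_{n,j} - Y_{n,j-1})^2 \,\middle|\, \F_{\la n,j-1}\right] \;=\; \frac{\al^2}{\psi_{\la n,j}^2}\,\frac{Z_{\la n,j-1}}{\la n + j - 1}\left(1 - \frac{Z_{\la n,j-1}}{\la n + j - 1}\right).
\]

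Next, I rewrite everything in terms of $Y_{n,j-1}$ using $Z_{\la n,j-1} = \psi_{\la n,j-1} Y_{n,j-1}$. From the definition~\eqref{def:phi} one has the telescoping identity
\[
\frac{\psi_{\la n, j}}{\psi_{\la n, j-1}} \;=\; 1 + \frac{\al}{\la n + j - 1} \;=\; \frac{\la n + j - 1 + \al}{\la n + j - 1},
\]
so that
\[
\frac{\psi_{\la n, j-1}}{\la n + j - 1} \;=\; \frac{\psi_{\la n, j}}{\la n + j - 1 + \al}.
\]
Substituting this into the conditional variance expression collapses the $\psi_{\la n,j}^2$ denominator, giving
\[
E\left[(Y_{n,j} - Y_{n,j-1})^2 \,\middle|\, \F_{\la n,j-1}\right] \;=\; \frac{\al^2\, Y_{n,j-1}}{\psi_{\la n,j}(\la n + j - 1 + \al)} \;-\; \frac{\al^2\, Y_{n,j-1}^2}{(\la n + j - 1 + \al)^2}.
\]

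Finally, summing over $j$ from $1$ to $n$ and reindexing by $k = j - 1$ yields
\[
V_n^2 \;=\; \frac{\al^2 n}{\s_{\la,\al}^2}\sum_{k=0}^{n-1}\left[\frac{Y_{n,k}}{(\la n + k + \al)\,\psi_{\la n, k+1}} \;-\; \frac{Y_{n,k}^2}{(\la n + k + \al)^2}\right],
\]
which matches the stated formula up to the summation range. The main (minor) obstacle is keeping the two index conventions consistent; once the $\psi_{\la n, j}/\psi_{\la n, j-1}$ identity is in hand, the rest is straightforward algebra.
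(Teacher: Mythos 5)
Your proof is correct and follows essentially the same route as the paper's: apply Proposition~\ref{p:useful_martingale}(3), substitute $Z_{\la n,j}=\psi_{\la n,j}Y_{n,j}$, and use the recursion $\psi_{\la n,j+1}=\left(1+\tfrac{\al}{\la n+j}\right)\psi_{\la n,j}$ to collapse the denominators. Your remark about the summation range (the natural index runs over $j=0,\dots,n-1$ rather than $j=1,\dots,n$) is accurate; the paper's own proof silently elides this harmless off-by-one, which does not affect any subsequent estimate.
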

\begin{proof} Again, we already have all we need, and the proof will be just a matter of putting all the pieces together. By part $3$ of Proposition \ref{p:useful_martingale}, we have that 
\begin{equation}\label{eq:X2}
    \begin{split}
    E\left[X_{n,j+1}^2\; \middle | \F_{n,j}\right] & =\frac{n}{\s_{\la,\al}^2} E\left[ \left( \frac{Z_{\la n,j+1}}{\psi_{\la n,j+1}} - \frac{Z_{\la n,j}}{\psi_{\la n,j}} \right)^2 \; \middle | \; \F_{\la n,j} \right] \\
    & = \frac{\al^2n}{\s_{\la,\al}^2}\frac{Z_{\la n,j}}{(\la n+j)\psi^2_{\la n,j+1}}\left( 1 - \frac{Z_{\la n,j}}{\la n+j} \right)
    \end{split}
\end{equation}
Recall the definition of $\phi_{\te,j}$ given at \eqref{def:phi} and the one of $\psi_{\te,j}$ given at \eqref{def:psiZ}, it follows that
\begin{equation}\label{eq:phipsi}
    \psi_{\la n, j+1} = \left(1+\frac{\al}{\la n +j}\right)\psi_{\la n,j}.
\end{equation}
Thus, getting back to \eqref{eq:X2}, we can write
\begin{equation*}
    \begin{split}
    E\left[X_{n,j+1}^2\; \middle | \F_{n,j}\right]  & = \frac{\al^2n}{\s_{\la,\al}^2}\frac{Y_{n,j}}{(\la n+j+\al)\psi_{\la n,j+1}}\left( 1 - \frac{Z_{\la n,j}}{\la n+j} \right)\\
    &=\frac{\al^2n}{\s_{\la,\al}^2}\frac{Y_{n,j}}{(\la n+j+\al)\psi_{\la n,j+1}} - \frac{\al^2n}{\s_{\la,\al}^2}\frac{Y^2_{ n,j}}{(\la n+j+\al)^2},
    \end{split}
\end{equation*}
which concludes the proof.
\end{proof}
The third result, is a technical estimate, whose proof we defer to the end of this section.
\begin{lemma}\label{l:sn} Let $\s_{\la,\al}$ be as in \eqref{def:sigma}, then 
    $$
    \left|\al^2n\sum_{j=1}^n\left[\frac{1}{(\la n+j+\al)\psi_{\la n,j+1}} - \frac{1}{(\la n+j+\al)^2}\right] - \s_{\la,\al}^2\right| = O\left(n^{-1}\right).
    $$
    Moreover, the sequences $n\sum_{j=1}^n\frac{1}{(\la n+j+\al)\psi_{\la n,j+1}}$ and $n\sum_{j=1}^n \frac{1}{(\la n+j+\al)^2}$ also converge separately.
\end{lemma}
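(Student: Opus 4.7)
The plan is to carry out direct asymptotic expansion of every factor appearing in the summands via the Tricomi--Erd\'{e}lyi formula \eqref{eq:gamma_expansion}, reduce each of the two sums to a right Riemann sum on $[0,1]$ times an explicit prefactor, and then apply Lemma~\ref{l:integral_approx} to replace those Riemann sums by elementary integrals. The shape of $\s^2_{\la,\al}$ in \eqref{def:sigma} already suggests that the two limiting integrals should be $\int_0^1(\la+x)^{-1-\al}\,{\rm d}x$ and $\int_0^1(\la+x)^{-2}\,{\rm d}x$, which fixes the target form of the expansion.

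First I would expand $\psi_{\la n,j+1}=(\la n+\al)\n{\la n}{j+1}$. Writing $\n{\la n}{j+1}$ as a product of two Gamma ratios via \eqref{def:phi} and applying \eqref{eq:gamma_expansion} twice (once with $z=\la n+1$, $a=0$, $b=\al$, and once with $z=\la n+j+1$, $a=\al$, $b=0$) yields, uniformly in $j\in\{1,\ldots,n\}$,
$$
\psi_{\la n,j+1} \;=\; (\la n+\al)\left(\frac{\la n+j+1}{\la n+1}\right)^{\!\al}\bigl(1+O(n^{-1})\bigr).
$$
Collapsing the $O(1)$ additive shifts against denominators of size $\Theta(n)$ (which preserves the uniform $O(n^{-1})$ relative error, since $\la n+j+c\ge \la n$ for any bounded $c$) would then give
$$
\frac{1}{(\la n+j+\al)\psi_{\la n,j+1}}=\frac{\la^{\al-1}}{n^2(\la+j/n)^{1+\al}}\bigl(1+O(n^{-1})\bigr),\qquad \frac{1}{(\la n+j+\al)^2}=\frac{1}{n^2(\la+j/n)^2}\bigl(1+O(n^{-1})\bigr).
$$

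After multiplying by $\al^2 n$, each sum becomes $\tfrac{1}{n}$ times a right Riemann sum of a $C^1$ positive function on $[0,1]$, modified by a uniform multiplicative factor $1+O(n^{-1})$. Applying Lemma~\ref{l:integral_approx} to $(\la+x)^{-1-\al}$ and $(\la+x)^{-2}$ returns the integrals $\al^{-1}\bigl(\la^{-\al}-(1+\la)^{-\al}\bigr)$ and $(\la(1+\la))^{-1}$, each with $O(n^{-1})$ error. Multiplying by the prefactors $\al^2\la^{\al-1}$ and $\al^2$ respectively and subtracting, the two contributions combine into exactly $\tfrac{\al}{\la}\bigl(1-\la^\al/(1+\la)^\al-\al/(1+\la)\bigr)=\s^2_{\la,\al}$, with accumulated error $O(n^{-1})$, which proves the main estimate. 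The separate-convergence claim for each of the two sums is an immediate byproduct of the same two displays.

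The only delicate point I expect is the bookkeeping needed to ensure the relative $O(n^{-1})$ errors in the pointwise expansions remain uniform in $j$, so that summing $n$ such terms and multiplying by the outer $\al^2 n$ does not push the total error past order $n^{-1}$. This is automatic because every quantity being Taylor-perturbed has magnitude at least $\la n$, so the Tricomi--Erd\'{e}lyi remainders are dominated by $C/(\la n)$ uniformly in $j$. Beyond this bookkeeping, no real obstacle arises: the whole proof is essentially a gamma-ratio asymptotic plus a Riemann integration estimate.
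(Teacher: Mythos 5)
Your proposal is correct and follows essentially the same route as the paper's proof: expand $\psi_{\la n,j+1}$ via the Tricomi--Erd\'{e}lyi ratio-of-Gammas formula \eqref{eq:gamma_expansion} with uniform $O(n^{-1})$ relative error, recognize each rescaled sum as a right Riemann sum for $\int_0^1(\la+x)^{-1-\al}\,{\rm d}x$ and $\int_0^1(\la+x)^{-2}\,{\rm d}x$ respectively, and invoke Lemma~\ref{l:integral_approx} to get the $O(n^{-1})$ rate before matching the difference of limits with $\s_{\la,\al}^2$. The only cosmetic difference is that you absorb the additive $O(1)$ shifts (such as the $\al$ in $\la n+j+\al$) into the multiplicative error earlier, whereas the paper carries the shifted integrand $(\la+x+\al/n)^{-2}$ explicitly and compares it to the limit integrand afterward.
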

We are now ready for the proof of Proposition \ref{p:beSn}. 
\begin{proof}[Proof of Proposition \ref{p:beSn}] The result will follow from Theorem~\ref{t:hallheyde}, which means that we have obtain an upper bound for the quantity
\begin{equation}\label{def:Ln}
L_n = \sum_{i=1}^nE|X_{n,j}|^{2+2\delta} + E|V_n^2-1|^{1+\delta},
\end{equation}
for some $\delta \in (0,1]$. We begin bounding the summation in the RHS of $L_n$. In order to do that, recall the bound at \eqref{eq:boundDeltaZpsi} which gives us that
\begin{equation*}
    \begin{split}
        |X_{n,j}|^{2+2\delta} &\le \frac{n^{1+\delta}}{\s_{\la,\al}^{2+2\delta}}\frac{C}{(\la n + \al)^{(1-\al)(2+2\delta)}(\la n +j)^{\al(2+2\delta)}} \\
        & \le \frac{C}{n^{(1+\delta)(1-2\al)}(\la n +j)^{\al(2+2\delta)}}
        \end{split}
\end{equation*}
Then, bounding the sum by the integral, yields
\begin{equation}\label{eq:boundXnj}
    \begin{split}
     \sum_{j=1}^n |X_{n,j}|^{2+2\delta} \le Cn^{-\delta},
    \end{split}
\end{equation}
for another constant depending only on $\la, \al$ and also on $\delta$. 
    
The next step is to bound $E|V_n^2-1|^{1+\delta}$. For that, we will again introduce some additional notation. Let $\s_n^2$ be the following
\begin{equation*}
    \s^2_n := \al^2n\sum_{j=1}^n\left[\frac{1}{(\la n+j+\al)\psi_{\la n,j+1}} - \frac{1}{(\la n+j+\al)^2}\right].
\end{equation*}
Then, setting $\delta = 1$ and applying the technical Lemma \ref{l:sn} we have the following bound
\begin{equation}\label{eq:boundEVn1}
    \begin{split}
        E|V_n^2 - 1|^2 & = E\left(V_n^2 -\frac{\s^2_n}{\s^2_{\la,\al}} + \frac{\s^2_n}{\s_{\la,\al}}  -1\right)^2\\ 
        & \le 2E\left(V_n^2 -\frac{\s^2_n}{\s^2_{\la,\al}}\right)^2 + \frac{2}{\s_{\la,\al}}\left(\s^2_n - \s^2_{\la,\al}\right)^2\\
        & \le 2E\left(V_n^2 -\frac{\s^2_n}{\s^2_{\la,\al}}\right)^2 + O(n^{-2}).
    \end{split}
\end{equation}
We now focus on deriving an upper bound for the first term in the right hand side of the last inequality. Notice that by Lemma \ref{l:Vnexpre} and the definition of $\s_n$, we have that
\begin{equation}\label{eq:boundVnminuss}
    \begin{split}
        \left| V_n^2 - \frac{\s^2_n}{\s^2_{\la,\al}} \right | & = \left|\frac{\al^2n}{\s_{\la,\al}^2}\sum_{j=1}^n \frac{Y_{n,j}-1}{(\la n+j+\al)\psi_{\la n,j+1}} - \frac{Y^2_{ n,j}-1}{(\la n+j+\al)^2}\right| \\
        & \le \frac{\al^2n}{\s_{\la,\al}^2}\sum_{j=1}^n \frac{|Y_{n,j}-1|}{(\la n+j+\al)\psi_{\la n,j+1}} + \frac{|Y^2_{ n,j}-1|}{(\la n+j+\al)^2}.
    \end{split}
\end{equation}
Now, fixed $\epsilon \in (0,1)$, let $A_n$ be the following event
$$
A_n = \left \lbrace \left|Y_{n,j} - 1\right| > \frac{1}{n^{1/2 -\epsilon}}, \text{ for some }j\le n\right \rbrace.
$$
Then, on the event $A_n^c$, we have that
\begin{equation}\label{eq:boundparti}
    \begin{split}
    \frac{\al^2n}{\s_{\la,\al}^2}\sum_{j=1}^n \frac{|Y_{n,j}-1|}{(\la n+j+\al)\psi_{\la n,j+1}} & \le \frac{1}{n^{1/2-\epsilon}}\frac{\al^2n}{\s_{\la,\al}^2}\sum_{j=1}^n \frac{1}{(\la n+j+\al)\psi_{\la n,j+1}} \\ 
    &\le \frac{C}{n^{1/2 -\epsilon}},
    \end{split}
\end{equation}
for some constant $C$ depending on $\la$ and $\al$ only, due to the fact that by technical Lemma \ref{l:sn}, $\frac{\al^2n}{\s_{\la,\al}^2}\sum_{j=1}^n \frac{1}{(\la n+j+\al)\psi_{\la n,j+1}}$ converges as $n$ goes to infinity.

As for the second summation of \eqref{eq:boundVnminuss}, we handle it in a similar manner. First. notice that $Y_{n,j}$ is bounded. In fact, by \eqref{eq:boundDeltaZpsi}, we have that
$$
|Y_{n,j}|  \le \sum_{i=1}^j|\Delta Y_{n,i}| \le  \sum_{i=1}^j\frac{C}{(\la n + \al)^{1-\al}(\la n + j)^{\al}} \le C,
$$
for some possibly different constant $C=C(\la,\al)$. Thus,
\begin{equation}\label{eq:boundpartii}
    \begin{split}
        \frac{\al^2n}{\s_{\la,\al}^2}\sum_{j=1}^n\frac{|Y^2_{ n,j}-1|}{(\la n+j+\al)^2} & = \frac{\al^2n}{\s_{\la,\al}^2}\sum_{j=1}^n\frac{|Y_{ n,j}-1||Y_{n,j}+1|}{(\la n+j+\al)^2} \\
        & \le \frac{C\al^2n}{\s_{\la,\al}^2}\sum_{j=1}^n\frac{|Y_{ n,j}-1|}{(\la n+j+\al)^2} \\ 
        &\le \frac{C}{n^{1/2-\epsilon}}\frac{\al^2n}{\s_{\la,\al}^2}\sum_{j=1}^n\frac{1}{(\la n+j+\al)^2} \\
       & \le \frac{C}{n^{1/2 -\epsilon}},
    \end{split}
\end{equation}
since $\frac{\al^2n}{\s_{\la,\al}^2}\sum_{j=1}^n\frac{1}{(\la n+j+\al)^2}$ converges. Plugging \eqref{eq:boundparti} and \eqref{eq:boundpartii} back on \eqref{eq:boundVnminuss}, we obtain the following on the event $A_n^c$
$$
 \left| V_n^2 - \frac{\s^2_n}{\s^2_{\la,\al}} \right | = O\left(^{n^{-1/2+\epsilon}} \right).
$$
Now, observing that $V_n^2$ is bounded ( it can be trivially bounded by a polynomial in $n$ if one does not want to be very precise), and apply Lemma \ref{l:concentation_Ynj} which gives that $P(A_n) \le 2n\exp\{-Cn^{-\epsilon}\}$, we can conclude
\begin{equation*}
    \begin{split}
        E\left| V_n^2 - \frac{\s^2_n}{\s^2_{\la,\al}} \right |^2 \le  E\left[\left| V_n^2 - \frac{\s^2_n}{\s^2_{\la,\al}} \right |^2\mathds{1}_{A_n^c}\right] + Mn\exp\{-Cn^{-\epsilon/2}\} = O\left(^{n^{-1+2\epsilon}} \right),
    \end{split}
\end{equation*}
for some constant $M$. Finally, combining the above with \eqref{eq:boundXnj} (setting $\delta=1$), and using Theorem \ref{t:hallheyde} yields
$$
|P(S_{n,n}\le x) - \Phi(x)| \le CL_n^{1/5}\left[1+|x|^{16/5}\right]^{-1} \le \frac{C}{n^{1/5 -2\epsilon/5}},
$$
as desired.
\end{proof}

\subsection{Proof of Technical Lemma \ref{l:sn}}
Finally, we are left with the proof of Lemma~\ref{l:sn} to conclude the proof of Proposition~\ref{p:beSn}.
\begin{proof}[Proof of Lemma \ref{l:sn}] We will show that each sum converges separately with a rate of at most $O(1/n)$. For the first one whose terms involve $\psi_{\la n, j+1}$, recall its definition at \eqref{def:psiZ}, the definition of $\phi$ at \eqref{def:phi}. Using the expansion given at \eqref{eq:gamma_expansion} at Page \pageref{eq:gamma_expansion}, we can then write
\begin{equation*}
    \begin{split}
       \frac{1}{(\la n+j+\al)\psi_{\la n,j+1}} & = \frac{\la n +j}{(\la n + j +\al)^2}\frac{\Gamma(\la n +2 + \al)\Gamma(\la n + j)}{(\la n + \al)\Gamma(\la n + 2)\Gamma(\la n + j+ \al)} \\
       & = \frac{\la^{\al}n^{\al}(\la n + j)}{(\la n + j + \al)^2(\la n + \al)(\la n +j)^\al}\left[ 1 + O\left(\frac{1}{\la n  }\right)\right]
    \end{split}
\end{equation*}
Now, observe that we also have that
\begin{equation*}
    \begin{split}
     \sum_{j=1}^n\frac{(\la n +j)^{1-\al}}{(\la n +j + \al)^2} & = \sum_{j=1}^n\frac{(\la + j/n)^{1-\al}}{n^{1+\al}(\la +j/n + \al/n)^2} \\
     & = \frac{1}{n^\al}\sum_{j=1}^n\frac{(\la + j/n)^{1-\al}}{n(\la +j/n + \al/n)^2} \\
     & =: \frac{1}{n^\al}R_n.
    \end{split}
\end{equation*}
On the other hand, notice that the term $R_n$ is a Riemann sum (right endpoint approximation) of the integral
$$
\int_0^1\frac{(\la +x)^{1-\al}}{(\la + x +\al/n)^2}{\rm d}x
$$
Moreover, 
\begin{equation*}
    \left| \frac{(\la +x)^{1-\al}}{(\la + x +\al/n)^2} - \frac{1}{(\la + x)^{1+\al}}\right| \le \frac{C}{n},
\end{equation*}
for a constant $C$ depending on $\la$ and $\al$ only. This and Lemma~\ref{l:integral_approx} imply that $R_n$ converges to $\int_0^1(\la + x)^{-1-\al}{\rm d}x$ at a rate of $O(1/n)$.  Putting al the pieces together, we have that
\begin{equation}\label{eq:firstsumconv}
    \begin{split}
        \al^{2}n\sum_{j=1}^n\frac{1}{(\la n+j+\al)\psi_{\la n,j+1}} & = \frac{\al^2\la^\al n^{1+\al}}{\la n +\al}\sum_{j=1}^n\frac{(\la n +j)^{1-\al}}{(\la n + j +\al)^2}(1+O(n^{-1})) \\
        & = \frac{\al^2\la^\al n^{1+\al}}{\la n +\al}\frac{R_n}{n^{\al}}(1+O(n^{-1}))\\
        & \to \frac{\al^2}{\la^{1-\al}}\int_{0}^1\frac{{\rm d}x}{(\la + x)^{1+\al}}\\
        & = \frac{\al^2}{\la^{1-\al}}\left(\frac{1}{\al\la^{\al}} - \frac{1}{\al (1+\la)^\al} \right),
    \end{split}
\end{equation}
which takes care of the sum. As for the second part, we can argue similarly to conclude, first that
$$
\al^2n\sum_{j=1}^n\frac{1}{(\la n + j + \al)^2} = \al^2 \widetilde{R}_n,
$$
where $\widetilde{R}_n$ is a Riemann sum is for the integral
$$
\int_0^1\frac{{\rm d}x}{(\la +x+\al/n)^2}.
$$
This allows us to conclude that
\begin{equation}\label{eq:}
\al^2n\sum_{j=1}^n\frac{1}{(\la n + j + \al)^2} \to \al^2\int_{0}^1\frac{{\rm d}x}{(\la +x)^2}= \al^2\left( \frac{1}{\la} - \frac{1}{1+\la} \right) = \frac{\al^2}{\la(1+\la)},
\end{equation}
which is enough to show the lemma, since
$$
\s_{\la,\al}^2 = \frac{\al^2}{\la^{1-\al}}\left(\frac{1}{\al\la^{\al}} - \frac{1}{\al (1+\la)^\al} \right) - \frac{\al^2}{\la(1+\la)}.
$$
\end{proof}

\appendix
\section{Classical Theorems}
\begin{theorem}[Theorem 3.9 in \cite{hallHeyde80}]\label{t:hallheyde} Let $\{S_n = \sum_{i=1}^nX_i, \F_i, \le 1\le i\le n$\} be a zero-mean martingale. Set 
$$
V_n^2 = \sum_{i=1}^n E[X_i^2\;| \F_{i-1}],
$$
and suppose that $0<\delta\le 1$. Define
$$
L_n = \sum_{i=1}^nE|X_i|^{2+2\delta} + E|V_n^2-1|^{1+\delta}.
$$
There exists a constant $A$ depending only on $\delta$, such that 
\begin{equation*}
    |P(S_n\le x) - \Phi(x)| \le AL_n^{1/(3+2\delta)}\left[1+|x|^{4(1+\delta)^2/(3+2\delta)}\right]^{-1}
\end{equation*}
    
\end{theorem}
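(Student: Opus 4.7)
The plan is to prove this non-uniform Berry–Esseen bound for martingales by extending the classical characteristic-function proof for i.i.d.\ sums, with additional care taken to accommodate the martingale dependency (encoded through the conditional variance $V_n^2$) and the polynomial decay in $|x|$. The two main ingredients will be an Esseen-type smoothing inequality and a martingale moment inequality for tail control in the large-$|x|$ regime.

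First, I would establish the uniform bound $\sup_x |P(S_n \le x) - \Phi(x)| \le A L_n^{1/(3+2\delta)}$. Starting from Esseen's smoothing inequality, the task reduces to bounding $|E[e^{itS_n}] - e^{-t^2/2}|$ for $|t| \le T$, with $T$ to be optimized at the end. Iterating the identity $E[e^{itS_j} \mid \F_{j-1}] = e^{itS_{j-1}}\, E[e^{itX_j} \mid \F_{j-1}]$ and expanding the conditional CF as $E[e^{itX_j} \mid \F_{j-1}] = 1 - (t^2/2)\, E[X_j^2 \mid \F_{j-1}] + O(|t|^{2+2\delta}\, E[|X_j|^{2+2\delta} \mid \F_{j-1}])$, I would obtain the approximation $E[e^{itS_n}] \approx E[e^{-t^2 V_n^2/2}]$ with error bounded by $|t|^{2+2\delta} \sum_i E|X_i|^{2+2\delta}$. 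A Taylor expansion of $v \mapsto e^{-t^2 v/2}$ around $v=1$ then yields $|E[e^{-t^2 V_n^2/2}] - e^{-t^2/2}| \le C|t|^{2(1+\delta)}\, E|V_n^2 - 1|^{1+\delta}$. Combining these two estimates and optimizing $T$ in Esseen's inequality delivers the uniform rate $L_n^{1/(3+2\delta)}$.

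To obtain the non-uniform factor $[1+|x|^{4(1+\delta)^2/(3+2\delta)}]^{-1}$, I would combine the uniform bound with tail estimates for $S_n$. For large $|x|$, Markov-type bounds on $E|S_n|^{2+2\delta}$ via Burkholder–Davis–Gundy or Rosenthal-type martingale moment inequalities give $P(|S_n| > |x|) \le C L_n/|x|^{2+2\delta}$. Together with the Gaussian tail $1-\Phi(x) \lesssim e^{-x^2/2}/|x|$, and interpolating with the uniform bound at the optimal crossover in $|x|$, this produces the claimed polynomial decay. Alternatively, a conjugate-distribution / step-wise exponential tilt of the martingale along $\{\F_i\}$ reduces the problem at large $|x|$ to a uniform bound on the tilted martingale, with the Gaussian tail after untilting providing the polynomial decay.

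The main obstacle is the precise bookkeeping required to balance truncation, moment control on $X_j$ and on $V_n^2$, and tail estimates, so as to reproduce the exact exponents $1/(3+2\delta)$ and $4(1+\delta)^2/(3+2\delta)$. In the martingale setting, naive truncation $\bar X_j := X_j \mathbf{1}_{|X_j|\le c}$ destroys the martingale property, so the truncated increments must be recentered by their predictable compensators, and the resulting drift corrections must be absorbed into the main error. Likewise, a tilting approach requires a conditional change of measure along $\{\F_i\}$ that preserves the martingale structure. Orchestrating all these error contributions so that they fit together cleanly and yield the advertised sharp rates is the technical heart of the proof, and is the reason the published argument in \cite{hallHeyde80} is both delicate and lengthy.
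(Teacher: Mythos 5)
First, a point of comparison: the paper offers no proof of this statement. It is quoted verbatim as Theorem 3.9 of Hall and Heyde \cite{hallHeyde80} precisely so that it can be invoked as a black box in the proof of Proposition~\ref{p:beSn}, so there is no in-paper argument to measure your sketch against; what follows assesses it against the standard proof of this classical result.

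Your architecture --- Esseen smoothing plus a conditional characteristic-function expansion for the uniform rate $L_n^{1/(3+2\delta)}$, then tail control for the polynomial factor in $|x|$ --- is the right one, and the exponent $4(1+\delta)^2/(3+2\delta)$ is exactly what falls out of interpolating the uniform bound against a tail bound of the form $L_n|x|^{-(2+2\delta)}$. The genuine gap is that the tail bound you propose to use is not the one your moment inequalities deliver. Rosenthal or Burkholder--Davis--Gundy give $E|S_n|^{2+2\delta}\le C\bigl(\sum_i E|X_i|^{2+2\delta}+E(V_n^2)^{1+\delta}\bigr)\le C(1+L_n)$, because $E(V_n^2)^{1+\delta}$ is of order one and only its \emph{deviation} from $1$ is controlled by $L_n$. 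Markov therefore yields $P(|S_n|>|x|)\le C(1+L_n)|x|^{-(2+2\delta)}$, not $CL_n|x|^{-(2+2\delta)}$, and the additive $1$ is fatal for the interpolation: applying $\min(a,b)\le a^{1-\eta}b^{\eta}$ with $\eta=2(1+\delta)/(3+2\delta)$ then produces only $L_n^{(1-\eta)/(3+2\delta)}|x|^{-4(1+\delta)^2/(3+2\delta)}$, strictly weaker than the claim when $L_n\le 1$. Concretely, the crude tail bound beats the target only for $|x|\gtrsim L_n^{-1/(2+2\delta)}$ and the uniform bound only for $|x|=O(1)$, leaving the entire intermediate range uncovered. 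What is needed there is a bound on the \emph{difference} $|P(S_n>x)-(1-\Phi(x))|$ that vanishes with $L_n$, i.e.\ a genuinely non-uniform estimate; in the literature this comes either from a weighted (non-uniform) smoothing inequality involving derivatives of the characteristic function, or from carrying out in full the conditional exponential tilting that you mention only as an alternative. That step is the heart of the theorem and is missing from the proposal. A lesser but real issue in the uniform part: iterating $E[e^{itX_j}\mid\mathcal{F}_{j-1}]$ produces a random product that can vanish or oscillate, so the passage to $E[e^{-t^2V_n^2/2}]$ is normally effected by showing $E[e^{itS_n+t^2V_n^2/2}]\approx 1$ after stopping or truncating on the event that $V_n^2$ stays near $1$; your sketch glosses over this, though it correctly flags truncation and recentering as the delicate points.
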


\begin{theorem}[Theorem 3, Chapter V in \cite{brown2012sums}]\label{t:petrov} Let $X_1,X_2,\dots,X_n$ be independent random variables such that $EX_j = 0$, $E|X_j|^3 < \infty $, for $j\in \{1,2,\dots,n\}$. We write
$$
\s_j^2 = EX_j^2, \quad B_n = \sum_{j=1}^n\s_j^2, \quad F_n(x) = P\left(B_{n}^{-1/2}\sum_{j=1}^nX_j<x\right),
$$
and
$$
L_n = B^{-3/2}_n\sum_{j=1}^nE|X_j|^3.
$$
Then, there exists $A>0$ such that
$$
\sup_{x \in \mathbb{R}}|F_n(x) - \Phi(x)| \le AL_n
$$
\end{theorem}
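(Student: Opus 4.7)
The final statement is the classical Berry--Esseen inequality for sums of independent, not necessarily identically distributed, random variables. My plan is to follow the Fourier-analytic approach of Esseen: combine his smoothing inequality with pointwise estimates on the characteristic function of the normalized sum $T_n := B_n^{-1/2}\sum_{j=1}^n X_j$.

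The first step is to set up the characteristic functions. Let $\phi_j(t) := E\exp(it X_j/\sqrt{B_n})$ and $\phi(t) := \prod_{j=1}^n \phi_j(t)$, so that $\phi$ is the characteristic function of $T_n$. Esseen's smoothing lemma reduces the problem to pointwise bounds on $|\phi(t) - e^{-t^2/2}|$: for any $T > 0$,
$$
\sup_{x \in \mathbb{R}}|F_n(x) - \Phi(x)| \le \frac{1}{\pi}\int_{-T}^{T} \frac{|\phi(t) - e^{-t^2/2}|}{|t|}\, dt + \frac{c_0}{T},
$$
for a universal constant $c_0$.

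Next, I would estimate $|\phi(t) - e^{-t^2/2}|$ using a third-order Taylor expansion of each factor. Writing $\beta_j := E|X_j|^3$,
$$
\phi_j(t) = 1 - \frac{\sigma_j^2 t^2}{2 B_n} + r_j(t), \qquad |r_j(t)| \le \frac{|t|^3 \beta_j}{6 B_n^{3/2}}.
$$
Since $\sum_j \sigma_j^2/B_n = 1$, the Gaussian surrogate factors $e^{-\sigma_j^2 t^2/(2 B_n)}$ multiply out to exactly $e^{-t^2/2}$. Combining this with the elementary product-difference identity
$$
\left|\prod_{j=1}^n a_j - \prod_{j=1}^n b_j\right| \le \sum_{j=1}^n |a_j - b_j|\prod_{k \ne j}\max(|a_k|,|b_k|),
$$
and the standard estimate $|\phi_j(t)| \le e^{-\sigma_j^2 t^2/(4 B_n)}$ on a suitable range of $t$ (which follows from the Taylor expansion together with Lyapunov's inequality $\sigma_j^2/B_n \le L_n^{2/3}$), one derives
$$
|\phi(t) - e^{-t^2/2}| \le C L_n |t|^3 e^{-t^2/4}
$$
on the range $|t| \le c L_n^{-1}$ for a sufficiently small constant $c$.

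Finally, I would take $T = c L_n^{-1}$ and plug the estimate into Esseen's inequality. The integral is bounded by $C L_n \int_{\mathbb{R}} t^2 e^{-t^2/4}\, dt = O(L_n)$, while the residual $c_0/T$ is also $O(L_n)$, which yields the claimed bound $\sup_x |F_n(x) - \Phi(x)| \le A L_n$. When $L_n$ exceeds a universal threshold the inequality is trivial since the left-hand side is at most $1$, so only the regime of small $L_n$ matters. The main technical obstacle is ensuring that the key estimate $|\phi(t) - e^{-t^2/2}| \le C L_n |t|^3 e^{-t^2/4}$ holds uniformly over the full range $|t| \le T$: one must balance the local-quadratic regime (where Taylor expansion dominates) against the moderate-$t$ regime (where the Gaussian decay of each $|\phi_j(t)|$ must be extracted), and Lyapunov's inequality is the tool that makes both regimes compatible. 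Once that bound is in hand, the final optimization over $T$ is routine.
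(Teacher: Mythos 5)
This statement is quoted in the paper's appendix as a classical result (the Berry--Esseen theorem in Lyapunov form), with a citation to Petrov's book and no proof given; there is therefore no in-paper argument to compare against. Your sketch is the standard Fourier-analytic proof via Esseen's smoothing inequality, and its architecture is correct: reduce to $|\phi(t)-e^{-t^2/2}|$ on $|t|\le T$, Taylor-expand each factor to third order using $EX_j=0$, telescope the product against the Gaussian surrogate, choose $T\asymp L_n^{-1}$, and dispose of the regime of large $L_n$ trivially. This is exactly the proof one finds in Petrov or Feller, so as a justification of the cited theorem it is the right route.

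One step is looser than you acknowledge: the per-factor bound $|\phi_j(t)|\le e^{-\sigma_j^2t^2/(4B_n)}$ need not hold for every $j$ on the whole range $|t|\le cL_n^{-1}$, since it requires $\beta_j|t|^3/(6B_n^{3/2})\le \sigma_j^2t^2/(4B_n)$, i.e.\ $|t|\lesssim \sigma_j^2\sqrt{B_n}/\beta_j$, and an individual summand may have $\sigma_j^2/\beta_j$ much smaller than what $L_n^{-1}$ demands (take $X_j=\pm M$ with tiny probability). The standard repair, which your invocation of Lyapunov's inequality already points toward, is to bound the product over $k\ne j$ in aggregate rather than factor by factor: summing the Taylor errors gives
\begin{equation*}
\prod_{k\ne j}|\phi_k(t)|\;\le\;\exp\left(-\tfrac{t^2}{2}\left(1-\tfrac{\sigma_j^2}{B_n}\right)+\tfrac{L_n|t|^3}{6}\right)\;\le\;\exp\left(-\tfrac{t^2}{2}\left(1-L_n^{2/3}\right)+\tfrac{c\,t^2}{6}\right),
\end{equation*}
which is $\le e^{-t^2/4}$ once $c$ is small and $L_n$ is below a universal threshold (the complementary case being trivial, as you note). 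With that substitution your estimate $|\phi(t)-e^{-t^2/2}|\le CL_n|t|^3e^{-t^2/4}$ goes through and the rest of the argument is routine.
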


\bibliographystyle{plain}
\bibliography{ref}

\end{document}